\theoremstyle{plain}
\newtheorem{Theorem}{Theorem}
\newtheorem{Lemma}[Theorem]{Lemma}
\newtheorem{Corollary}[Theorem]{Corollary}
\newtheorem{Proposition}[Theorem]{Proposition}
\newtheorem{Observation}[Theorem]{Observation}
\newtheorem{Claim}{Claim}
\theoremstyle{Definition}
\newtheorem{Conjecture}[Theorem]{Conjecture}
\theoremstyle{Remark}
\newcommand{\proofend}{{\hfill$\Box$}}
\title{The burning number conjecture holds for trees of order $n$ with at most $\left\lfloor \sqrt{n-1}\right\rfloor$ degree-2 vertices}
	\author{
		Jiajun Ning\thanks{Email: jiajunning@126.com.},
		Xian'an Jin\thanks{Email: xajin@xmu.edu.cn.},
		Meiqiao Zhang\thanks{Corresponding Author.
			Email: meiqiaozhang@xmu.edu.cn and meiqiaozhang95@163.com.}
		\\
School of Mathematical Sciences, Xiamen University, P.R. China}
\date{}
\begin{document}
\onehalfspacing
\maketitle

\begin{abstract}
Inspired by the spread of information in social networks and graph-theoretic processes such as Firefighting and graph cleaning, Bonato, Janssen and Roshanbin introduced in 2016 the burning number $b(G)$ of any finite graph $G$. They conjectured that $b(G)\le \lceil n^\frac{1}{2}\rceil$ holds for all connected graphs $G$ of order $n$, and observed that it suffices to prove the conjecture for all trees. In 2024, Murakami confirmed the conjecture for trees without degree-2 vertices. In this paper, we prove that for all trees $T$ of order $n$ with $n_2$ degree-2 vertices, $$b(T)\le  \left\lceil \left(n+n_2-\left\lceil\sqrt{n+n_2+0.25}-1.5\right\rceil\right)^{\frac{1}{2}}\right\rceil.$$
Hence, the conjecture holds for all trees of order $n$ with at most $\left\lfloor \sqrt{n-1}\right\rfloor$ degree-2 vertices. 
\par
\noindent {\bf Keywords:} Burning Number Conjecture, Tree, Degree-2 vertex

\noindent {\bf MSC2020:} 05C05, 05C57
\end{abstract}
\section{Introduction}
In this paper, we assume all graphs are finite and simple.

Let $G$ be a connected graph. A \textbf{burning process} of $G$ is defined as follows.
Initially, all vertices in $G$ are unburned. In each round $r\ge 1$, we choose one unburned vertex $x_r$ in $G$ to be the \textbf{source} of round $r$, and burn the source $x_r$ and all the unburned vertices that are adjacent to some burned vertices in $G$. 
Clearly, once a vertex is burned in round $r$, all of its unburned neighbors in $G$ will be burned in round $r+1$. Once a vertex is burned, it remains burned. When all vertices in $G$ are burned, the burning process terminates. 
An example is shown in Figure \ref{P4}. 
\begin{figure}[H]
\centering
\includegraphics[width=4.5cm]{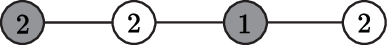}
\caption{Burning a path of order 4 in two rounds, where the vertices marked in grey are sources and the numbers on vertices represent the round they become burned}
\label{P4}
\end{figure}
If a burning process terminates in $k$ rounds, then $(x_1,x_2,\dots,x_k)$ is called a \textbf{burning sequence} for $G$ of \textbf{length} $k$. 
Obviously, any burning sequence for $G$ induces a burning process of $G$.
The \textbf{burning number} $b(G)$ of $G$ is the minimum length of a burning sequence for $G$.
A burning sequence for $G$ is called \textbf{optimal} if its length is $b(G)$.

This kind of process was first considered by Alon \cite{Elon} in 1992, motivated by a communication problem. Then in 2016, further inspired by the spread of information in social networks and graph-theoretic processes such as Firefighting and graph cleaning, Bonato, Janssen and Roshanbin \cite{Bonato} formally introduced the related concepts and raised the following conjecture,  which is known as the \textbf{Burning Number Conjecture}.
\begin{Conjecture}[Burning Number Conjecture \cite{Bonato}]
	Let $G$ be a connected graph of order $n$. Then $b(G)\le \lceil n^\frac{1}{2}\rceil$.
\end{Conjecture}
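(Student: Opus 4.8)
The plan is to reduce the statement, for an arbitrary tree $T$, to Murakami's theorem on trees without degree-2 vertices and then to sharpen that bound by exactly the amount needed to pay for the $n_2$ term; throughout I use the standard characterization that $b(H)\le k$ if and only if there exist vertices $v_1,\dots,v_k$ (not necessarily distinct) with $\bigcup_{i=1}^k B_H[v_i,k-i]=V(H)$, where $B_H[v,r]$ is the closed ball of radius $r$. For the reduction, given $T$ of order $n$ with $n_2$ degree-2 vertices, I form $T'$ by attaching one new pendant leaf to each degree-2 vertex of $T$. Each such vertex then has degree $3$ and no new degree-2 vertex is created, so $T'$ is a tree of order $N:=n+n_2$ with no degree-2 vertex; let $P$ be the set of $n_2$ added leaves. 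Since the added vertices are leaves, $d_{T'}$ restricted to $V(T)$ equals $d_T$. I then prove $b(T)\le b(T')$ by source replacement: starting from an optimal cover of $V(T')$ and a source that is a leaf of $P$ adjacent to some $w$, the identity $d(u,\text{that leaf})=d(u,w)+1$ for every old vertex $u$ gives $B[w,k-i]\supseteq B[\text{that leaf},k-i]\cap V(T)$; replacing every $P$-source by its neighbour leaves the balls covering $V(T)$, hence yields a burning sequence for $T$ of the same length. This works for \emph{every} tree, including paths (where $n_2=n-2$ and $T'$ is a caterpillar), so no hypothesis on $n_2$ enters here.

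Next I reformulate the target. A direct arithmetic check shows the displayed quantity equals $\min\{k:\ N\le k^2+k-1\}$: writing $c=\lceil\sqrt{N+0.25}-1.5\rceil$, one verifies that $\lceil\sqrt{N-c}\,\rceil=k$ exactly on the range $k^2-k\le N\le k^2+k-1$ (the boundary values $N=k^2+k-1$ and $N=k^2+k$ give $k$ and $k+1$ respectively). Thus the theorem reduces, via Step 1, to the following strengthening of Murakami's theorem: \emph{every tree without degree-2 vertices and of order $N\le k^2+k-1$ has burning number at most $k$.}

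Establishing this strengthening is the main obstacle. Murakami's theorem gives only $b(T')\le\lceil\sqrt N\,\rceil$, i.e. $N\le k^2\Rightarrow b\le k$, whereas I must gain the additional $k-1$ vertices. The guiding principle is a volume gain: in a degree-2-free tree every internal vertex has degree at least $3$, so a ball of radius $r\ge 1$ can be centred so as to cover at least $2r+2$ vertices rather than the path-bound $2r+1$, and $1+\sum_{i=1}^{k-1}(2i+2)=k^2+k-1$ matches the target exactly. Turning this heuristic into an actual decreasing-ball cover is the crux. I plan an inductive peeling argument: root $T'$, repeatedly locate a deepest branch vertex, peel off a ball of the current radius that realises the extra vertex, and recurse on what remains. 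The delicate point — the step most likely to demand careful case analysis, paralleling the technical heart of Murakami's proof — is that deleting a ball can create new degree-2 vertices and can disconnect $T'$, so the induction must be phrased for \emph{forests} and must carry an invariant (on the order, or on the weighted count in which degree-2 vertices are counted twice) that survives these operations.

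Finally, for the corollary, assume $n_2\le\lfloor\sqrt{n-1}\rfloor$ and set $k=\lceil\sqrt n\,\rceil$, so that $n\le k^2$. Since $n-1<k^2$ we have $\lfloor\sqrt{n-1}\rfloor\le k-1$, hence $n_2\le k-1$ and therefore $N=n+n_2\le k^2+(k-1)=k^2+k-1$. By the previous steps this gives $b(T)\le k=\lceil\sqrt n\,\rceil$, confirming the Burning Number Conjecture for this class of trees.
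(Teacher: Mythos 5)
The statement you were asked to prove is the full Burning Number Conjecture for \emph{all connected graphs} of order $n$ --- which is open, and which the paper itself does not prove; it states it only as a conjecture and establishes partial cases. Your proposal, even read charitably, ends by confirming the conjecture only for trees with $n_2\le\lfloor\sqrt{n-1}\rfloor$ degree-2 vertices, i.e.\ the paper's Corollary~\ref{main}, not the conjecture. Nothing in your argument handles, say, a path: there $n_2=n-2$, your Step 1 produces a caterpillar of order $N=2n-2$, and $2n-2>k^2+k-1$ for $k=\lceil\sqrt n\,\rceil$ already for small $n$, so your strengthened Murakami bound gives nothing (paths instead need Proposition~\ref{PC}-type arguments). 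You also pass silently from connected graphs to trees; that reduction is Proposition~\ref{GT} and should at least be invoked. So as a proof of the stated conjecture the proposal cannot succeed regardless of how the details are filled in.

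Within its actual scope, the proposal tracks the paper's outer structure but omits its technical heart. Your Step 1 ($b(T)\le b(T')$ by attaching leaves and replacing $P$-sources by their neighbours) is sound and matches the paper's proof of Theorem~\ref{main1}, which instead cites subtree monotonicity (Proposition~\ref{isometric}); your arithmetic reformulation of the bound as $\min\{k:\,N\le k^2+k-1\}$ is correct (I checked the boundary cases), and your corollary derivation $n_2\le k-1\Rightarrow N\le k^2+k-1$ is in fact cleaner than the paper's monotonicity computation with $f_n(x)$. But the crux --- every degree-2-free tree of order $N\le k^2+k-1$ has burning number at most $k$, which is exactly equivalent to the paper's Proposition~\ref{00} with $m=k-1$ --- is only announced: ``I plan an inductive peeling argument,'' followed by a list of the very obstructions (deleted balls create degree-2 vertices and disconnect the tree) that make the naive $2r+2$-per-ball counting fail. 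The paper resolves precisely these obstructions by a different mechanism: induction on $m$ rather than on balls or forests; Lemma~\ref{sep} producing a vertex $v$ all but one of whose components have order at most $p=2\lceil\sqrt{n-m}\,\rceil-1.5$; Lemma~\ref{sep2} bounding internal vertices so that the small components burn outward from $v$ within the round budget with no centered-ball cover needed; and Lemma~\ref{sep3} (smoothing $v_k$), which is exactly the repair for the degree-2 vertex your peeling would create. Until you formulate and prove a forest invariant doing the work of these three lemmas, your proposal is a program, not a proof, and the gap sits exactly where the paper's Section~\ref{secpre} does its work.
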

To support this conjecture,  Bonato, Janssen and Roshanbin \cite{Bonato} showed that the bound is sharp for all paths and cycles.
\begin{Proposition}[\cite{Bonato}]\label{PC}
	Let $G$ be the path or cycle of order $n$. Then $b(G)=\lceil n^\frac{1}{2}\rceil$.
\end{Proposition}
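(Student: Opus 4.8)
The plan is to recast the burning process as a covering problem. For vertices $u,v$ let $d(u,v)$ denote their distance in $G$, and let $B(x,r)=\{v\in V(G): d(v,x)\le r\}$ be the ball of radius $r$ centered at $x$. The key observation is that if $(x_1,\dots,x_k)$ is a burning sequence of length $k$, then the source $x_i$ chosen in round $i$ is burned in round $i$, and the fire it starts reaches every vertex within distance $k-i$ of $x_i$ by the end of round $k$. Hence the process burns all of $G$ within $k$ rounds if and only if
\[
V(G)=\bigcup_{i=1}^{k} B(x_i,\,k-i).
\]
Since the radii $k-i$ range over $0,1,\dots,k-1$ as $i$ runs from $k$ down to $1$, this yields the equivalence: $b(G)\le k$ if and only if $V(G)$ can be covered by $k$ balls of radii $0,1,\dots,k-1$. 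I would isolate this equivalence first, as it drives both directions of the proof.

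For the lower bound, I would use the fact that in a path or a cycle every ball of radius $r$ is an interval (respectively an arc) of consecutive vertices, so $|B(x,r)|\le 2r+1$. Consequently, $k$ balls of radii $0,1,\dots,k-1$ cover at most
\[
\sum_{i=0}^{k-1}(2i+1)=k^2
\]
vertices. If such balls cover all $n$ vertices then $k^2\ge n$, i.e.\ $k\ge n^{1/2}$, and since $k$ is an integer, $k\ge\lceil n^{1/2}\rceil$. By the equivalence above this gives $b(G)\ge\lceil n^{1/2}\rceil$.

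For the matching upper bound I would set $k=\lceil n^{1/2}\rceil$, so that $k^2\ge n$, and exhibit an explicit covering. Labelling the vertices of the path $v_0,\dots,v_{n-1}$ in order, I would place the ball of radius $k-1$ to cover $v_0,\dots,v_{2k-2}$, the ball of radius $k-2$ to cover the next $2k-3$ vertices, and so on, each successive ball covering the block of consecutive vertices immediately following the previous one. After all $k$ balls are placed, the number of covered vertices is $\sum_{i=0}^{k-1}(2i+1)=k^2\ge n$, so every vertex of the path is burned, any excess of the last balls being harmlessly absorbed at the endpoint $v_{n-1}$. The cycle $C_n$ is handled identically by placing the arcs consecutively around the cycle, where the surplus guaranteed by $k^2\ge n$ only creates overlap. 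Thus $b(G)\le\lceil n^{1/2}\rceil$, and combining with the lower bound gives $b(G)=\lceil n^{1/2}\rceil$.

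I expect no serious obstacle here: the argument is entirely elementary. The two points that genuinely require care are verifying the round-by-round spreading claim underlying the covering equivalence, and checking that the consecutive placement in the upper bound indeed reaches the final vertex (which follows from $k^2\ge n$) while respecting the two boundaries of the path and the wrap-around of the cycle.
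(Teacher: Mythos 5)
This proposition is quoted in the paper from Bonato, Janssen and Roshanbin \cite{Bonato} without proof, so there is no in-paper argument to compare against; your proposal is essentially the classical proof from that source, and it is correct. The covering reformulation (balls of radii $k-1,k-2,\dots,0$ around the sources), the counting bound $\sum_{i=0}^{k-1}(2i+1)=k^2$ for the lower bound, and the consecutive-blocks placement for the upper bound are exactly the standard route. One point deserves explicit care in the direction ``covering $\Rightarrow$ $b(G)\le k$'': the centers of your balls need not be distinct, and a center may already be burned when its round arrives, whereas a burning sequence must consist of distinct, as-yet-unburned sources. The fix is routine --- skip such a round or substitute an arbitrary unburned vertex, which only enlarges the burned set --- and is precisely the content of the paper's Observation~\ref{obsv}, so it would be natural to invoke that (or a one-line version of it) rather than leave the equivalence asserted; likewise, when $n<k^2$ the tail blocks on the path may overshoot $v_{n-1}$, and one should say the corresponding centers are shifted or placed arbitrarily, as you gesture at. With those two sentences added, the argument is complete.
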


Further, Bonato, Janssen and Roshanbin \cite{Bonato}  simplified the conjecture by the following proposition, which indicates that if the conjecture holds for all trees, then it holds for all connected graphs.
\begin{Proposition}[\cite{Bonato}]\label{GT}
Let $G$ be a connected graph. Then
$$b(G)= \min\{b(T):  \text{T is a spanning tree of G}\}.$$
\end{Proposition}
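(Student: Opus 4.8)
The plan is to prove the two inequalities $b(G)\le \min\{b(T):T\text{ a spanning tree of }G\}$ and $\min\{b(T):T\text{ a spanning tree of }G\}\le b(G)$ separately, and throughout I would use the standard reformulation of the burning process in terms of covering by balls. Writing $B_H(v,i)$ for the set of vertices of a graph $H$ within distance $i$ of $v$, a sequence $(x_1,\dots,x_k)$ of distinct vertices is a burning sequence for $H$ of length $k$ if and only if $\bigcup_{r=1}^{k}B_H(x_r,k-r)=V(H)$. This holds because the source $x_r$ chosen in round $r$ has its fire spread for exactly $k-r$ further rounds, so that after round $k$ the burned set is precisely $\bigcup_{r=1}^{k}B_H(x_r,k-r)$. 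I would first record this equivalence as a short lemma, since both directions of the proposition then become statements about ball covers.

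For the inequality $b(G)\le b(T)$ for every spanning tree $T$, the key observation is distance monotonicity: since $T$ is a spanning subgraph of $G$, every path of $T$ is a path of $G$, so $d_G(u,v)\le d_T(u,v)$ and hence $B_T(v,i)\subseteq B_G(v,i)$ for all $v$ and $i$. Consequently, if $(x_1,\dots,x_k)$ is an optimal burning sequence for $T$, then $\bigcup_{r=1}^{k}B_G(x_r,k-r)\supseteq\bigcup_{r=1}^{k}B_T(x_r,k-r)=V(T)=V(G)$, so the very same sequence burns $G$ within $k$ rounds. Taking the minimum over all spanning trees $T$ yields $b(G)\le\min_T b(T)$.

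The reverse inequality is the substantive direction, and this is where I expect the main difficulty. Let $k=b(G)$ and fix an optimal burning sequence $(x_1,\dots,x_k)$ for $G$; let $t(v)$ denote the round in which $v$ becomes burned, so that $t(x_r)=r$ and $t(v)\le k$ for every $v$. I would construct a spanning forest $F$ of $G$ by declaring the sources $x_1,\dots,x_k$ to be roots and assigning to each remaining vertex $v$ a neighbour $u$ in $G$ with $t(u)=t(v)-1$ as its parent; such a $u$ exists because a non-source $v$ (which necessarily has $t(v)\ge 2$) is burned only by the fire spreading from an already-burned neighbour. Since parents have strictly smaller burning time, $F$ is acyclic, and in the tree of $F$ rooted at $x_r$ the unique path from $x_r$ to any descendant $v$ has length $t(v)-r\le k-r$. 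As $G$ is connected, I then extend $F$ to a spanning tree $T$ of $G$ by adding edges of $G$ that join distinct components without creating cycles.

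It remains to check that $(x_1,\dots,x_k)$ burns $T$ in $k$ rounds. Because $T$ contains every edge of $F$, each $F$-path is a $T$-path, so $d_T(x_r,v)\le d_F(x_r,v)\le k-r$ whenever $v$ lies in the $F$-tree rooted at $x_r$. Every vertex belongs to some such tree, hence $\bigcup_{r=1}^{k}B_T(x_r,k-r)=V(T)$, which by the ball-cover equivalence gives $b(T)\le k=b(G)$, and therefore $\min_T b(T)\le b(G)$. The delicate points to secure are that each non-source vertex really does have a parent at the previous burning time, so that $F$ spans $G$, and that augmenting $F$ to a tree can only shorten distances from the roots and so never destroys the ball cover; once these are in place, the two inequalities combine to give $b(G)=\min_T b(T)$.
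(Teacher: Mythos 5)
The paper offers no proof of this proposition; it is quoted directly from \cite{Bonato}, so the relevant comparison is with the original argument of Bonato, Janssen and Roshanbin---and your proposal is essentially that argument. The easy direction via distance monotonicity ($B_T(v,i)\subseteq B_G(v,i)$ for a spanning tree $T$), and the substantive direction via the burning forest (each non-source vertex choosing a parent burned one round earlier, the forest then extended to a spanning tree $T$ in which $d_T(x_r,v)\le t(v)-r\le k-r$ for every $v$ in the tree rooted at $x_r$) are exactly the standard proof, and both are sound as you have set them up.

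One technical misstatement should be repaired. Your opening lemma---that a sequence of \emph{distinct} vertices is a burning sequence if and only if $\bigcup_{r=1}^{k}B_H(x_r,k-r)=V(H)$---is false as stated: distinctness does not ensure that $x_j$ is still unburned in round $j$. In $K_3$ with vertices $a,b,c$, the sequence $(a,b,c)$ is distinct and its balls cover, yet $c$ is burned in round $2$ by spread from $a$, so it cannot be chosen in round $3$. The correct characterization from \cite{Bonato} adds the condition $d_H(x_i,x_j)\ge j-i$ for all $i<j$. Your two applications survive, but for different reasons, and it is worth saying why. In the reverse direction the condition holds automatically: the sequence is a genuine burning sequence in $G$, so $d_G(x_i,x_j)\ge j-i$, and $d_T\ge d_G$ since $T$ is a spanning subgraph. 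In the forward direction, however, an optimal sequence for $T$ may have $d_G(x_i,x_j)<j-i$ even though $d_T(x_i,x_j)\ge j-i$ (think of a cycle and its spanning path), so a mere ball cover in $G$ is all you get; you must then convert the cover into a valid burning sequence, either by replacing any already-burned designated source with an arbitrary unburned vertex, or by invoking the empty-source mechanism that this paper formalizes as Observation~\ref{obsv}. With that one patch, your proof is complete and coincides with the cited one.
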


From then on, a lot of researchers have focused on establishing upper bounds for the burning numbers of general graphs. Let $G$ be a connected graph of order $n$.  Bonato, Janssen and Roshanbin  \cite{Bonato} first showed in 2016 that $b(G)\le 2\lceil n^\frac{1}{2}\rceil-1$, and very soon, Land and Lu \cite{Land} improved the result into $b(G)\le \lceil \frac{-3+\sqrt{24n+33}}{4}\rceil$. In 2023, Bastide et al. \cite{Bastide} provided the best current bound that $b(G)\le \sqrt{\frac{4n}{3}}+1$, while in 2024, Norin and Turcotte \cite{Norin} proved the  asymptotic version of the Burning Number Conjecture.
However, the Burning Number Conjecture itself remains open.

Moreover, the conjecture has been extensively studied in the case of trees.
In 2018, Bessy et al. \cite{Bessy} characterized all the binary trees of depth $r$ with the burning number $r+1$, and showed that $b(T) \le \lceil \sqrt{n + n_2 + \frac{1}{4} }+ \frac{1}{2}\rceil$ for any tree $T$ of order $n$ with $n_2$ degree-2 vertices.
In 2024, Murakami \cite{Mur} improved this upper bound by showing that the conjecture holds for trees without degree-2 vertices.
\begin{Theorem}[\cite{Mur}]\label{Mur}
Let $T$ be a tree of order $n$ with $n_2$ degree-2 vertices. Then $b(T)\le \left\lceil (n+n_2)^\frac{1}{2}\right\rceil$.
\end{Theorem}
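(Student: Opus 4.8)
The plan is to recast burning as a covering problem and then assemble an explicit burning sequence from a weighted decomposition of the tree that charges an extra unit to each degree-$2$ vertex. First I would record the standard reformulation: a sequence $(x_1,\dots,x_k)$ is a burning sequence of length $k$ exactly when the source $x_i$ introduced in round $i$ has let its fire spread for $k-i$ rounds by the end, so the closed balls $N_{k-i}[x_i]$ (vertices within distance $k-i$ of $x_i$) cover $V(T)$. Since a subtree of radius at most $r$ lies inside the radius-$r$ ball about its centre, it follows that $b(T)\le k$ whenever $V(T)$ can be covered by subtrees $S_0,\dots,S_{k-1}$ with $\mathrm{rad}(S_r)\le r$; we are even allowed to leave some radii unused. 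Setting $k=\lceil (n+n_2)^{1/2}\rceil$ and noting the identity $\sum_{r=0}^{k-1}(2r+1)=k^2\ge n+n_2$ fixes the numerical target I must hit.

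Next I would introduce the weight $w(v)=2$ if $\deg_T(v)=2$ and $w(v)=1$ otherwise, so that the total weight is $W=\sum_v w(v)=n+n_2\le k^2$. The crucial point is that $w$ is read off from the degrees in the \emph{original} tree $T$ and is never recomputed as vertices get removed, so $W$ is a fixed quantity throughout the construction. The engine I am after is a local lemma: \emph{as long as the uncovered weight is at least $2r+1$, the uncovered forest contains a subtree of radius at most $r$ of weight at least $2r+1$}. Granting this, one peels subtrees in decreasing radius $r=k-1,\dots,0$, each step removing weight $\ge 2r+1$, so the total removed reaches $\min(k^2,W)=W$ by the radius-$0$ step and the cover is complete. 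To prove the local lemma I would take a component $C$ of the uncovered forest and a longest path $P$ in it: if $P$ has at least $2r+1$ vertices, a radius-$r$ subtree centred at its midpoint already captures weight $\ge 2r+1$; the delicate case is when every uncovered piece is short, and here the deficit must be repaid either by degree-$2$ vertices, each contributing weight $2$ so that a thin run of length $\ell$ carries weight $2\ell$, or by branching at a high-degree vertex, which lets one radius-$r$ subtree scoop up several arms at once. This compensation is precisely why the statement reads $n+n_2$ rather than $n$: the surcharge on degree-$2$ vertices is exactly what pays for the ``path-like'' pieces that a single ball covers inefficiently, and paths (Proposition \ref{PC}) are the extremal case that calibrates where one is forced to cut.

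I expect the main obstacle to be making this count uniform over the whole peeling, since two effects fight the clean greedy bound. First, once the large-radius balls are removed the uncovered forest may shatter into many shallow components, none of which alone carries weight $2r+1$, so a single connected radius-$r$ subtree cannot meet the quota; second, deleting a subtree lowers the degrees of neighbouring vertices and could manufacture new degree-$2$ vertices, which is exactly why I freeze $w$ at the original degrees rather than tracking current ones. To defeat the fragmentation I would abandon the rigid radius schedule in favour of a global argument: partition $V(T)$ into subtrees and verify the majorization (Hall-type) condition that, for every threshold $t$, the number of parts of radius $\ge t$ is at most $k-t$, which lets the radii $\{0,\dots,k-1\}$ be matched injectively to the parts. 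Establishing that $W\le k^2$, combined with the degree-$2$ accounting, forces this majorization inequality is where the real work lies, and where I would spend the bulk of the argument.
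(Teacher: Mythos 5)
Your proposal sets up a legitimate framework but leaves the theorem's entire combinatorial core unproven. The reduction of burning to covering by balls of radii $k-1,\dots,0$ is sound (it is the same flexibility the paper codifies in Observation~\ref{obsv}), and your weighting $w(v)=2$ on degree-2 vertices is an equivalent accounting to the paper's actual reduction, which attaches a leaf to each degree-2 vertex and invokes subtree monotonicity (Proposition~\ref{isometric}) in the proof of Theorem~\ref{main1}. But your engine --- the ``local lemma'' that the uncovered forest always contains a connected radius-$r$ subtree of weight at least $2r+1$ --- is simply false after the first peel, for exactly the fragmentation reason you concede: the uncovered forest can consist of many components each of weight far below $2r+1$ (scattered vertices and short arms), so no connected radius-$r$ subtree meets the quota even when the total uncovered weight is enormous. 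Your fallback, a partition of $V(T)$ into subtrees satisfying the Hall-type condition that at most $k-t$ parts have radius at least $t$, is indeed a sufficient criterion for $b(T)\le k$, but you never construct such a partition; saying that $W\le k^2$ ``forces'' the majorization is not an argument --- it is a restatement of the theorem, and you explicitly defer it (``where the real work lies''). A weight bound alone cannot force it: the condition needs each part to carry weight roughly $2\rho+1$ for its radius $\rho$, which is your local lemma again, per part.

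For contrast: this theorem is quoted from Murakami rather than reproved here, but Proposition~\ref{00} of the paper refines exactly his method, and it defeats fragmentation by never allowing it. Lemma~\ref{sep} locates a vertex $v$ at which all but one branch has order at most $p\approx 2k$; since a degree-2-free tree of order at most $p$ has at most about $k-2$ internal vertices (Lemma~\ref{sep2}) and fire spreading from $v$ gains at least one internal vertex per round, the single source $v$ finishes \emph{all} small branches within $k$ rounds with no further sources spent there (Claim~\ref{clm1}); the unique large branch remains \emph{connected}, and after smoothing the junction vertex (Lemma~\ref{sep3}) --- which restores the degree-2-free property --- it is handled by induction with one fewer round (Claim~\ref{clm2}). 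Note also that your device of freezing $w$ at the original degrees, intended to dodge the appearance of new degree-2 vertices, undercuts any induction on the residual forest: the leftover pieces are no longer degree-2-free and their frozen weights no longer reflect their structure, so neither Theorem~\ref{Mur} nor your own lemma can be applied to them recursively. The sequential, connectivity-preserving peel with smoothing is precisely the missing mechanism, and without it (or a genuine construction of your majorizing partition) the proposal does not constitute a proof.
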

\noindent Within the same year, the conjecture was further confirmed for trees with a single degree-2 vertex in the Bachelor's thesis of van der Tol~\cite{Tol}, in which the author also noted the difficulty of confirming the conjecture for trees with slightly more degree-2 vertices.

In this paper, we shall refine Murakami's methods to  provide an enhanced upper bound on the burning numbers of trees, which may be lower than $\lceil n^{\frac12}\rceil$ for many trees of order $n$.
\begin{Theorem}\label{main1}
Let $T$ be a tree of order $n$ with $n_2$ degree-2 vertices. Then 
$$b(T)\le  \left\lceil \left(n+n_2-\left\lceil\sqrt{n+n_2+0.25}-1.5\right\rceil\right)^{\frac{1}{2}}\right\rceil.$$
\end{Theorem}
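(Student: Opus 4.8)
The plan is to refine Murakami's argument, which presumably establishes the bound $b(T)\le\lceil(n+n_2)^{1/2}\rceil$ by building, from the tree $T$, an auxiliary structure whose order is controlled by $n+n_2$ and whose burning number dominates that of $T$. The key observation driving the improvement is that the quantity $n+n_2$ overcounts: each degree-2 vertex is, in effect, counted twice, and Murakami's reduction produces a weighted or subdivided object in which degree-2 vertices contribute an extra unit. My first step would therefore be to recall the precise combinatorial object underlying Theorem~\ref{Mur}; I expect it to be a \emph{caterpillar-like} or \emph{spider/broom decomposition}, or an embedding of $T$ into a path-power structure of order $n+n_2$, for which $b(T)$ is at most the burning number of that structure, which in turn is $\lceil(n+n_2)^{1/2}\rceil$ by a greedy argument analogous to Proposition~\ref{PC}.

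The second and central step is to extract a saving of $\left\lceil\sqrt{n+n_2+0.25}-1.5\right\rceil$ from the order of that auxiliary object. Writing $N=n+n_2$ and $k=\lceil N^{1/2}\rceil$, the strategy is to show that the greedy burning scheme on the auxiliary structure never needs to ``use up'' the full budget of $k$ balls of radii $k-1,k-2,\dots,0$; rather, the degree-2 vertices (or the subdivision vertices introduced by the reduction) create slack that lets the last few rounds cover more than their nominal radius, or equivalently lets us remove $m:=\left\lceil\sqrt{N+0.25}-1.5\right\rceil$ vertices from the count before invoking the path-burning bound. The natural mechanism is that the smallest burning balls (radii $0,1,\dots,m-1$) together cover $1+3+5+\cdots+(2m-1)=m^2$ vertices, and the threshold $m=\left\lceil\sqrt{N+0.25}-1.5\right\rceil$ is exactly the largest integer for which $m^2+\cdots$ fits under the discrepancy between $k^2$ and $N$; so I would verify the arithmetic identity that
\begin{equation}
\left\lceil\left(N-\left\lceil\sqrt{N+0.25}-1.5\right\rceil\right)^{1/2}\right\rceil\le\left\lceil N^{1/2}\right\rceil,
\end{equation}
and more importantly that the reduced count $N-m$ is achievable, i.e.\ that $m$ vertices genuinely drop out of the worst case. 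This is where I would lean on the structure of trees with few degree-2 vertices: branch vertices force the burning balls near the leaves to overlap or to be ``wasted'', and quantifying that waste by exactly $m$ is the crux.

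The main obstacle I anticipate is precisely making the $-m$ saving \emph{rigorous and uniform}, rather than just plausible in extremal examples. Murakami's bound is tight for some configurations (paths essentially), so the saving must come from a global accounting that isolates where a path-like tree cannot simultaneously maximize $n_2$ and resist the improved bound. Concretely, I would set up an induction or a charging scheme in which I repeatedly peel off a shortest-radius burning ball, reduce to a smaller tree $T'$ of order $n'$ with $n_2'$ degree-2 vertices, and maintain the invariant that the number of remaining rounds is bounded by $\left\lceil(n'+n_2'-m')^{1/2}\right\rceil$ with $m'$ tracking the accumulated slack. The delicate point is handling the boundary cases where peeling a ball either destroys a branch vertex (changing $n_2$ nonmonotonically) or leaves a component that is itself a bare path; I expect the ceiling-and-floor bookkeeping around $\left\lceil\sqrt{N+0.25}-1.5\right\rceil$ to require a short separate lemma comparing the two ceiling expressions across the integer thresholds $N=t^2+t+1,\dots,(t+1)^2+(t+1)$, which is routine but must be done carefully. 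Finally, the corollary that the Burning Number Conjecture holds when $n_2\le\lfloor\sqrt{n-1}\rfloor$ should follow by substituting that bound on $n_2$ into the displayed estimate and checking the resulting inequality $b(T)\le\lceil n^{1/2}\rceil$ reduces to an elementary monotonicity check.
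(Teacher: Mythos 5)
You have correctly guessed the outer reduction---the paper does pass to an auxiliary degree-2-free tree of order $N=n+n_2$, by attaching a pendant leaf at each degree-2 vertex and invoking subtree monotonicity of the burning number (Proposition~\ref{isometric})---and the arithmetic inequality you display is genuine, but it is the content of Corollary~\ref{main}, not of Theorem~\ref{main1}. The theorem's real content is exactly the claim you yourself label ``the crux'': that $m=\left\lceil\sqrt{N+0.25}-1.5\right\rceil$ vertices can be deducted before taking the square root. On this point your proposal has a genuine gap, and the mechanism you sketch would fail as stated. The saving does not come from slack or ``waste'' in the small burning balls near leaves, and no uniform per-ball or charging account can deliver it: the deduction is only available when the order is large relative to $m$. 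The paper's Proposition~\ref{00} (the actual engine: $b(T)\le\lceil(n-m)^{1/2}\rceil$ for degree-2-free trees of order $n\ge m(m+1)+1$) carries the hypothesis $n\ge m(m+1)+1$, and the paper exhibits a degree-2-free tree of order $6$ with $b(T)=3>\lceil\sqrt{6-2}\,\rceil$, showing the threshold is best possible; a scheme extracting $m$ wasted vertices from an arbitrary degree-2-free tree would contradict this example. Your proposed invariant tracking $n_2'$ through ball-peeling also fights a difficulty the paper never faces, since after the leaf-addition there are no degree-2 vertices left and the reductions are designed to keep it that way.

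What actually closes the gap is an induction on $m$, with a decomposition quite different from peeling a shortest-radius ball. Given a degree-2-free tree of order $n\ge m(m+1)+2$ (the boundary case $n=m(m+1)+1$ follows directly from Theorem~\ref{Mur}), Lemma~\ref{sep} produces a vertex $v$ such that every component of $T-v$ except one has order at most $p=2\lceil(n-m)^{1/2}\rceil-1.5$. Degree-2-freeness is then exploited through Lemma~\ref{sep2}: a degree-2-free tree of order at most $\lfloor p\rfloor+1$ has at most $\lceil(n-m)^{1/2}\rceil-2$ internal vertices, so a \emph{single} source placed at $v$ in round~1 burns all the small components by propagation alone within the budget, spending no further sources on them. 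Every subsequent source is placed in the unique large component; after smoothing its attachment vertex (Lemma~\ref{sep3} lifts burning sequences back through the smoothing at a cost of one round), its order is at most $n-2\lceil(n-m)^{1/2}\rceil\le\bigl(\lceil(n-m)^{1/2}\rceil-1\bigr)^2+(m-1)$, so the induction hypothesis at $m-1$ burns it in $\lceil(n-m)^{1/2}\rceil-1$ further rounds. The $m$ units of saving thus accrue one per recursion level---each level donates all of its late sources to a single large branch because the small branches burn for free---rather than from overlap of balls near leaves. Your plan identifies the right target and the right arithmetic thresholds, but the decomposition lemma, the internal-vertex count, and the smoothing device that make the deduction rigorous are all absent from it.
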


Moreover, Theorem~\ref{main1} confirms the Burning Number Conjecture for trees of order $n$ with at most $\left\lfloor \sqrt{n-1}\right\rfloor$ degree-2 vertices, which improves the results in \cite{Mur,Tol}.
	
\begin{Corollary}\label{main}
Let $T$ be a tree of order $n$ with at most $\left\lfloor \sqrt{n-1}\right\rfloor$ degree-2 vertices. Then $b(T)\le \lceil n^{\frac{1}{2}}\rceil$.
	\end{Corollary}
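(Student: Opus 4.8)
The plan is to deduce Corollary~\ref{main} directly from Theorem~\ref{main1} by a monotonicity argument, treating the theorem as a black box. Writing the upper bound of Theorem~\ref{main1} as $\lceil g^{1/2}\rceil$ with
$$g = n+n_2-\left\lceil\sqrt{n+n_2+0.25}-1.5\right\rceil,$$
I would first note that $g\ge 0$ (it appears under a square root in the theorem) and that both $x\mapsto\sqrt{x}$ on $[0,\infty)$ and $x\mapsto\lceil x\rceil$ are nondecreasing; hence it suffices to prove $g\le n$, which at once yields $\lceil g^{1/2}\rceil\le\lceil n^{1/2}\rceil$. Since $g\le n$ is equivalent to
$$n_2\le\left\lceil\sqrt{n+n_2+0.25}-1.5\right\rceil,$$
the whole corollary reduces to this last inequality.

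To handle the ceiling on the right, I would use that for an integer $n_2$ one has $\lceil x\rceil\ge n_2$ if and only if $x>n_2-1$. Applying this with $x=\sqrt{n+n_2+0.25}-1.5$ turns the target into $\sqrt{n+n_2+0.25}>n_2+0.5$, and since both sides are nonnegative I may square to obtain the clean equivalent condition
$$n>n_2^2.$$
Thus the entire statement collapses to verifying the single arithmetic inequality $n>n_2^2$ under the hypothesis, with no further graph-theoretic input required.

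Finally, the hypothesis $n_2\le\left\lfloor\sqrt{n-1}\right\rfloor$ gives $n_2\le\sqrt{n-1}$, hence $n_2^2\le n-1<n$, which is precisely $n>n_2^2$. Chaining the implications backward through the reductions above then delivers $b(T)\le\lceil n^{1/2}\rceil$.

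I expect the only delicate point to be the strictness in this chain. In the boundary case $n=n_2^2$ one has $\sqrt{n+n_2+0.25}=\sqrt{(n_2+0.5)^2}=n_2+0.5$, which forces the inner ceiling to equal exactly $n_2-1$ and makes the reduced inequality \emph{fail}; so one genuinely needs \emph{strict} $n>n_2^2$ rather than $n\ge n_2^2$. This is exactly why the hypothesis must be phrased with $\left\lfloor\sqrt{n-1}\right\rfloor$, yielding $n_2^2\le n-1$, and cannot be relaxed to $\left\lfloor\sqrt{n}\right\rfloor$; the care lies entirely in not weakening the floor.
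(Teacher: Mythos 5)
Your proof is correct, and it reaches the paper's key inequality by a cleaner route. Both you and the paper use Theorem~\ref{main1} as a black box and reduce the corollary to showing $g\le n$, equivalently $n_2\le\left\lceil\sqrt{n+n_2+0.25}-1.5\right\rceil$; the difference lies in how that inequality is established. The paper defines $f_n(x)=\left(n+x-\sqrt{n+x+0.25}+1.5\right)^{1/2}$ on $0\le x\le n-2$, verifies $f_n'(x)>0$ by differentiation, evaluates at $x_0=\sqrt{n-1}-1$ where $f_n(x_0)=\sqrt{n}$ to settle all $n_2\le\lfloor\sqrt{n-1}-1\rfloor$, and then treats the boundary case $n_2=\lfloor\sqrt{n-1}\rfloor$ by a separate ceiling computation built around $\sqrt{(n_2+0.5)^2+1}\in(n_2+0.5,\,n_2+1]$. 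Your characterization $\lceil x\rceil\ge k\iff x>k-1$ (for integer $k$), followed by squaring, converts the target into the exact equivalence $n>n_2^2$, which disposes of the entire range $n_2\le\lfloor\sqrt{n-1}\rfloor$ in one stroke via $n_2^2\le n-1<n$; it also absorbs the $n=1$ case that the paper handles separately (its domain restriction requires $n\ge 2$). What each approach buys: the paper's monotonicity argument generalizes mechanically if the bound's inner expression were less tractable, while yours yields an \emph{equivalence} rather than a one-way estimate, so your observation that the reduction fails precisely when $n=n_2^2$ proves that the hypothesis $n_2\le\lfloor\sqrt{n-1}\rfloor$ is the best this bound can deliver and cannot be relaxed to $\lfloor\sqrt{n}\rfloor$ --- a sharpness statement the paper's proof only leaves implicit. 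One cosmetic point: your aside that $g\ge 0$ ``because it appears under a square root'' deserves the one-line verification $\left\lceil\sqrt{n+n_2+0.25}-1.5\right\rceil\le n+n_2$, but this is immediate and does not affect the argument.
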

	
The proofs of Theorem~\ref{main1} and Corollary~\ref{main} will be given in Section~\ref{secmain}, while some preparatory results are developed in Section~\ref{secpre}.

\section{Preliminary results
\label{secpre}}
In this section, we establish some preliminary results for proving Theorem~\ref{main1} in the next section.

We first introduce some necessary terminologies and notations. For any positive integer $k$, let $[k]=\{1,2,\dots,k\}$.
For any graph $G$, denote by $V(G)$ the vertex set of $G$, $E(G)$ the edge set of $G$, and $|G|$ the order of $G$. For any $v\in V(G)$, denote by $d_G(v)$ the degree of $v$ in $G$, $N_G(v)$ the set of neighbors of $v$ in $G$, and $G-v$ the subgraph of $G$ obtained by deleting $v$ and all of its incident edges. If $d_G(v)\ge 2$, then $v$ is called an \textbf{internal vertex} of $G$. 
For any $V\subseteq V(G)$, denote by  $G[V]$ the subgraph of $G$ induced by $V$.
For any $uv\in E(G)$, denote by $G-uv$ the subgraph of $G$ obtained by deleting $uv$. 
For any tree $T$ and $uv\in E(T)$, let $T_v(uv)$ denote the connected component of $T-uv$ that contains $v$.

For convenience in the following proofs, we next show that, for any connected graph, one may allow the sources of some rounds (except the first) in a burning process to be empty.
\begin{Observation}\label{obsv}
Let $G$ be a connected graph with all vertices unburned and let $x_1\in V(G)$ be the source of round 1. In each round $r\ge 2$, let the source of round $r$ be empty or an unburned vertex $x_r$ in $G$, and burn $x_r$ (if it exists) and all the unburned vertices that are adjacent to some burned vertices in $G$. Once a vertex is burned, it remains burned. When all the vertices in $G$ are burned, this process terminates. Suppose that this process terminates in $k$ rounds. Then this process is identical to a burning process induced by some burning sequence $(y_1,y_2,\dots,y_k)$ for $G$ where for all $i\in [k]$, $y_i=x_i$ whenever $x_i$ exists.
\end{Observation}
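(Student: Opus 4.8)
The plan is to reconstruct a genuine burning sequence round by round, forcing it to burn exactly the same set of vertices as the given relaxed process in every round. For $r\ge 0$ let $B_r$ denote the set of all vertices burned by the end of round $r$ in the relaxed process, so that $B_0=\emptyset$, $B_1=\{x_1\}$, and $B_k=V(G)$ by assumption. Writing $N_G(S)$ for the set of vertices adjacent in $G$ to some vertex of a set $S$, the relaxed process satisfies $B_r=B_{r-1}\cup N_G(B_{r-1})\cup\{x_r\}$ when $x_r$ exists, and $B_r=B_{r-1}\cup N_G(B_{r-1})$ when the source of round $r$ is empty. Since $k$ is the first round at which all vertices are burned, we have $\emptyset\ne B_r\subsetneq V(G)$ for every $1\le r\le k-1$.

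I would then define the candidate sequence $(y_1,\dots,y_k)$ as follows: set $y_i=x_i$ whenever $x_i$ exists, and for each round $i\ge 2$ whose source is empty, let $y_i$ be any vertex of $N_G(B_{i-1})\setminus B_{i-1}$. Proving that this set is nonempty is the one genuinely substantive point: since $\emptyset\ne B_{i-1}\subsetneq V(G)$ and $G$ is connected, $B_{i-1}$ must have at least one neighbor outside itself, so a valid choice of $y_i$ always exists. This is precisely where connectivity of $G$ enters, and it is the step I expect to require the most care to state correctly, in particular ruling out $B_{i-1}=V(G)$ for $i-1<k$ using the minimality of the termination round $k$.

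Finally, I would verify by induction on $r$ that the genuine burning process induced by $(y_1,\dots,y_k)$ produces precisely the sets $B_r$ at each round. The base case $r=1$ is immediate since $y_1=x_1$. For the inductive step, assume the genuine process has burned exactly $B_{r-1}$ by round $r-1$. If $x_r$ exists, then $y_r=x_r$ is unburned (it was chosen as an unburned source in the relaxed process, so $x_r\notin B_{r-1}$), and both processes add $N_G(B_{r-1})\cup\{x_r\}$, giving $B_r$. If the source was empty, then $y_r\in N_G(B_{r-1})\setminus B_{r-1}$ is unburned and already belongs to $B_{r-1}\cup N_G(B_{r-1})$, so burning it as a source leaves $B_r=B_{r-1}\cup N_G(B_{r-1})$ unchanged. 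In either case the two processes agree at round $r$, and each $y_r$ is a legitimate (unburned) source, so $(y_1,\dots,y_k)$ is a valid burning sequence. Since $B_r\ne V(G)$ for $r<k$ while $B_k=V(G)$, the induced burning process terminates in exactly $k$ rounds and is identical to the relaxed process, with $y_i=x_i$ whenever $x_i$ exists, as required.
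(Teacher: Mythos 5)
Your proposal is correct and follows essentially the same route as the paper: the paper fills each empty round $i$ with a vertex $s_i$ that becomes newly burned in that round (existence guaranteed by connectedness and the fact that the process has not yet terminated), which is exactly your choice of $y_i\in N_G(B_{i-1})\setminus B_{i-1}$. Your version merely makes explicit, via the sets $B_r$ and the induction, the verification that the paper leaves as ``easily verified.''
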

\begin{proof}
Since $G$ is connected and this process terminates in $k$ rounds, for each $2\le r\le k$, there is some $s_r\in V(G)$ that becomes burned in round $r$. In other words, $s_r$ is unburned in round $r-1$ and burned in round $r$. Note that $s_r$ is possibly $x_r$ if $x_r$ exists.

For each $i\in [k]$, let $y_i=x_i$ if $x_i$ exists, and let $y_i=s_i$ otherwise. Then it can be easily verified that $(y_1,y_2,\dots,y_k)$ is a burning sequence for $G$ that induces a burning process identical to the given process.
\end{proof}

Also, we present a lemma, which generalizes Lemma 2 from~\cite{Mur} by decomposing a tree into several smaller components through the removal of a single vertex.

\begin{Lemma}\label{sep}
Let $T$ be a tree of order $n\ge 3$. Then for any real number $p\in [1, n-1)$, there exists a vertex $v$ in $T$ with $N_T(v)=\{v_1,v_2,\dots,v_k\}$, where $k\ge 2$, such that $|T_v(vv_k)|> p$ and $|T_{v_i}(vv_i)|\leq p$ for all $i\in [k-1]$. 
\end{Lemma}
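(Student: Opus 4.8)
The plan is to produce $v$ and its distinguished neighbour $v_k$ by a single extremal choice, with no iteration or induction. Call an ordered pair $(u,w)$ of adjacent vertices of $T$ \emph{heavy} if $|T_u(uw)| > p$, i.e.\ if deleting $uw$ leaves the side containing $u$ with more than $p$ vertices. Among all heavy pairs I would select one, say $(v,v_k)$, for which $|T_v(vv_k)|$ is as small as possible. The claimed vertex is this $v$ and its distinguished neighbour is $v_k$; the remaining neighbours are listed as $v_1,\dots,v_{k-1}$ in any order.

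First I would confirm that heavy pairs exist, so the minimum is taken over a nonempty finite set. Choosing any leaf $\ell$ with neighbour $w$, deletion of $\ell w$ leaves $w$ on a side of order $n-1$, and since $p<n-1$ the pair $(w,\ell)$ is heavy. This is exactly where the hypothesis $p<n-1$ enters.

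The two conclusions then follow. Condition $|T_v(vv_k)|>p$ is immediate from the heaviness of $(v,v_k)$. For the condition that every other branch is small I would argue by contradiction: if some neighbour $v_i$ with $i\ne k$ satisfied $|T_{v_i}(vv_i)|>p$, then $(v_i,v)$ would itself be a heavy pair. Since the branch $T_{v_i}(vv_i)$ is a proper subtree of $T_v(vv_k)$ — the latter contains $v$ together with all branches at $v$ except the $v_k$-branch — we would get $|T_{v_i}(vv_i)|\le |T_v(vv_k)|-1<|T_v(vv_k)|$, contradicting the minimality of $(v,v_k)$. Hence $|T_{v_i}(vv_i)|\le p$ for every $i\in[k-1]$.

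It remains to check $k\ge 2$. If $v$ were a leaf, its only edge would give $|T_v(vv_k)|=1\le p$ (using $p\ge 1$), so $(v,v_k)$ could not be heavy; thus the minimizing vertex has degree at least $2$. The point to get right is the choice of the extremal quantity, namely minimizing the size of the heavy side: it is precisely this minimality, together with the elementary subtree-containment inequality $|T_{v_i}(vv_i)|\le|T_v(vv_k)|-1$, that forces all remaining branches below the threshold, and everything else is routine bookkeeping.
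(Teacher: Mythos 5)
Your proof is correct, and it takes a genuinely different (though closely related) route from the paper's. The paper argues algorithmically: it starts at the neighbour $v$ of a leaf $v_k$, for which $|T_v(vv_k)|=n-1>p$, and whenever some other branch $T_{v_1}(vv_1)$ exceeds $p$ it moves to $v_1$ and repeats, concluding with a (somewhat terse) appeal to finiteness for termination; implicitly the process terminates because the oversized branch one walks into strictly shrinks at each step. You replace this iteration by a single extremal choice: among all ordered adjacent pairs $(u,w)$ with $|T_u(uw)|>p$ (nonempty, by the same leaf observation, using $p<n-1$), pick one minimizing $|T_u(uw)|$. Your minimality contradiction via the proper containment $T_{v_i}(vv_i)\subsetneq T_v(vv_k)$, giving $|T_{v_i}(vv_i)|\le |T_v(vv_k)|-1$, is exactly the strict-decrease fact that drives the paper's termination, but packaged as a one-line argument that needs no process or termination bookkeeping; your check that $p\ge 1$ forces the minimizer to have degree $k\ge 2$ is also handled cleanly (the paper gets $k\ge 2$ from $|T_v(vv_k)|=n-1>p$ at the starting vertex instead). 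The extremal version is the more succinct and self-contained write-up; the paper's version has the mild advantage of being constructive in the sense of describing a search procedure.
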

\begin{proof}
Let $v$ be a vertex in $T$ with $N_T(v)=\{v_1,v_2,\dots,v_k\}$ such that $v_k$ is a leaf of $T$.
Then $|T_{v}(vv_k)|=n-1>p\ge 1$, which implies that $v$ is not a leaf of $T$ and $k\ge 2$. If $|T_{v_i}(vv_i)|\leq p$ for all $i\in[k-1]$, then the consequence holds. Otherwise, without loss of generality, we suppose $|T_{v_1}(vv_1)|> p\ge 1$. \par 
Since $|T_{v_1}(vv_1)|\ge 2$, $v_1$ has at least two neighbors in $T$. Suppose that $N_T(v_1)=\{y_1,y_2,\dots,y_j\}$, where $j\ge 2$ and $y_j=v$. Then $|T_{v_1}(v_1y_j)|=|T_{v_1}(vv_1)|> p$. If $|T_{y_i}(v_1y_i)|\leq p$ for all $i\in[j-1]$, then the consequence holds. Otherwise, we continue this process in the same way. Since $T$ is a tree of a finite order $n$ and $p\ge 1$, the process must terminate. Hence the consequence holds.
\end{proof}

For any tree $T$ and a degree-2 vertex $w\in V(T)$ with $N_T(w)=\{w_1,w_2\}$, the \textbf{split} of $T$ by $w$ is the forest obtained from $T$ by replacing $w$ with two new vertices, one adjacent to $w_1$ and the other adjacent to $w_2$.

Now we provide an upper bound on the number of internal vertices in a tree in terms of its order and the number of degree-2 vertices.

\begin{Lemma}\label{sep2}
Let $T$ be a tree of order $n$ with $n_2$ degree-2 vertices. Then $T$ contains at most $\left\lfloor\frac{n_2+n-2}{2}\right\rfloor$ internal vertices.
\end{Lemma}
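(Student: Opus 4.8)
The plan is to argue purely by counting, via the handshake identity for trees, partitioning the vertices according to their degree. First I would split $V(T)$ into three classes: the leaves (degree $1$), of which there are say $n_1$; the degree-$2$ vertices, of which there are $n_2$; and the vertices of degree at least $3$, of which there are say $n_{\geq 3}$. The internal vertices are exactly those of degree at least $2$, so their count is $n_2 + n_{\geq 3}$, and the goal is to bound this quantity from above. We also record the trivial partition identity $n = n_1 + n_2 + n_{\geq 3}$.

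The next step is to invoke the fact that $T$, being a tree of order $n$, has exactly $n-1$ edges, so the degree sum satisfies $\sum_{v \in V(T)} d_T(v) = 2(n-1)$. Bounding this sum below using the three classes gives
\begin{equation}
2(n-1) = \sum_{v\in V(T)} d_T(v) \geq n_1 + 2n_2 + 3n_{\geq 3}.
\end{equation}
Eliminating $n_1$ by substituting $n_1 = n - n_2 - n_{\geq 3}$ from the partition identity, the right-hand side becomes $n + n_2 + 2n_{\geq 3}$, and rearranging yields $n_{\geq 3} \leq \tfrac{n - 2 - n_2}{2}$.

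Finally I would add $n_2$ to both sides to recover the count of internal vertices:
\begin{equation}
n_2 + n_{\geq 3} \leq n_2 + \frac{n-2-n_2}{2} = \frac{n + n_2 - 2}{2}.
\end{equation}
Since the number of internal vertices is a nonnegative integer, it is at most $\left\lfloor \frac{n_2 + n - 2}{2}\right\rfloor$, which is the claimed bound. There is no genuine obstacle here; the argument is a one-shot consequence of the tree edge count combined with the degree-class partition. The only point requiring a little care is the lower bound on the degree sum, where each vertex of degree at least $3$ must be counted with weight exactly $3$ (not its true degree) so that the inequality stays linear in the class sizes, and then the passage to the floor at the very end since the left-hand side is an integer while $\frac{n+n_2-2}{2}$ need not be.
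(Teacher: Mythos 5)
Your proof is correct, and it takes a genuinely different route from the paper. The paper first splits $T$ at its $n_2$ degree-2 vertices into $n_2+1$ components, each a tree with no degree-2 vertices, then applies to each component the handshake-type inequality $l_i \geq I_i + 2$ (leaves versus internal vertices in a tree without degree-2 vertices) and sums over components, correcting for the $n_2$ duplicated vertices. You instead run a single global degree count on $T$ itself: $2(n-1) \geq n_1 + 2n_2 + 3n_{\geq 3}$ together with $n = n_1 + n_2 + n_{\geq 3}$ gives $n_2 + n_{\geq 3} \leq \frac{n+n_2-2}{2}$ directly, with the floor following from integrality. The underlying arithmetic is the same (the paper's per-component inequality is itself a degree-sum estimate), but your version bypasses the splitting construction entirely — in fact the split operation is defined in the paper only to serve this one proof, so your argument would let one drop that definition. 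What the paper's route buys is the explicit structural statement that a tree without degree-2 vertices has at least two more leaves than internal vertices, which is conceptually aligned with the Murakami setting the paper builds on; what yours buys is brevity and uniformity, handling the degree-2 vertices inside the count rather than cutting them out. One shared caveat: both your proof and the paper's implicitly assume $n \geq 2$ (your class partition omits a degree-0 vertex, and the paper's inequality $l_i \geq I_i + 2$ fails for a one-vertex component); this is harmless, since for $n=1$ the stated bound is $\lfloor -\frac{1}{2} \rfloor = -1$ and the lemma is never invoked in that degenerate case.
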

\begin{proof}
We split $T$ into $n_2+1$ connected components $T_1,T_2,\dots,T_{n_2+1}$ by the $n_2$ degree-2 vertices. Then each component $T_i$ ($1\leq i\leq n_2+1$) is a tree without degree-2 vertices.

For each $i\in[n_2+1]$, let $l_i$ and $I_i$ be the numbers of leaves and internal vertices in $T_i$ respectively. Then $|T_i|=l_i+I_i$, and $l_i\geq I_i+2$ by the Handshaking Lemma. Let $I$ be the number of internal vertices in $T$. Then $I=n_2+\sum_{i=1}^{n_2+1}I_i$. Moreover,
$$|T|=n=\sum_{i=1}^{n_2+1}(l_i+I_i)-n_2\geq \sum_{i=1}^{n_2+1}(2I_i+2)-n_2=2(I+1)-n_2.$$ 
It implies that $I\le \frac{n_2+n-2}{2}$. Since $I$ is an integer, the result is proven.
\end{proof}

Let $T$ be a tree and $w\in V(T)$ with $N_T(w)=\{w_1,w_2,\dots,w_q\}$, where $q\ge 2$. 
Suppose that $N_T(w)$ contains $p$ leaves of $T$, say $w_1,w_2,\dots,w_p$ if $p>0$.
Then a tree $T'$ obtained by \textbf{smoothing} $w$ in $T$ is a tree formed from $T$ by deleting $w$ and performing the following operation:
\begin{enumerate}
\item if $p\le 2$, then form a path $w_1w_{3}w_{4}\dots w_qw_2$;
\item if $p\ge3$, then delete $w_3,w_4,\dots,w_p$ and form a path $w_1w_{p+1}w_{p+2}\dots w_qw_2$.
\end{enumerate}
See Figures \ref{Smooth1} and \ref{Smooth2} as examples.
Then it is clear to see that $V(T)\setminus V(T')$ consists of $w$ and some leaves in $T$, and if no vertex in $T$ other than $w$ has degree two, then  $T'$ contains no degree-2 vertices.
\begin{figure}[ht]
\centering
\includegraphics[width=13.5cm]{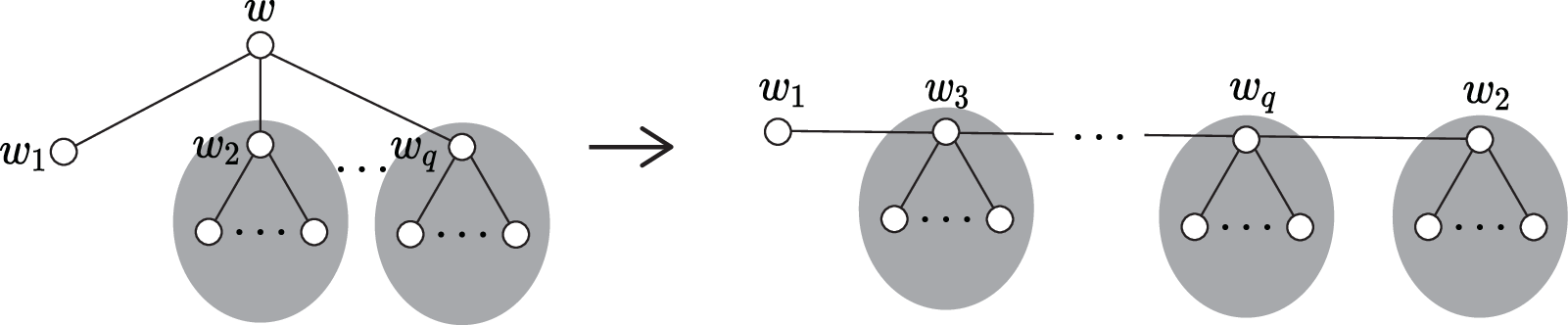}

\vspace{0cm}
{}\hfill \hspace{0cm} (a) $T$  \hspace{7.2cm}  (b) $T'$  \hfill {}
\caption{An example for $T$ and $T'$ with $p=1$}
\label{Smooth1}
\end{figure}
\begin{figure}[ht]
\centering
\includegraphics[width=13cm]{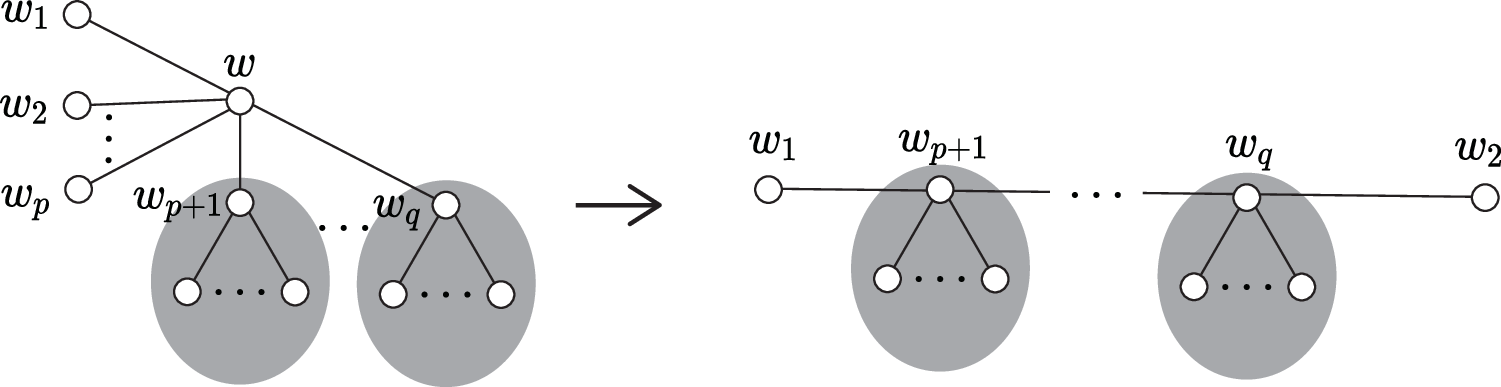}

\vspace{0cm}
{}\hfill \hspace{-0.5cm} (a) $T$  \hspace{5.5cm}  (b) $T'$  \hfill {}    	
\caption{An example for $T$ and $T'$ with $p\ge2$}
\label{Smooth2}
\end{figure}

To conclude this section, we show that burning sequences for some special trees can be obtained recursively through removing leaves and smoothing operations.
\begin{Lemma}\label{sep3}
Suppose that $T$ is a tree with $u\in V(T)$, where $d_T(u)\ge 3$ and $u$ is adjacent to a leaf $v$. Let $T'$ be a tree obtained by smoothing $u$ in $T-v$. Then $b(T)\le b(T')+1$. In particular, $T$ has a burning sequence of length not greater than $b(T')+1$ beginning with $v$.
\end{Lemma}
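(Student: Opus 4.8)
The plan is to turn an optimal burning sequence for $T'$ into one for $T$ by burning the leaf $v$ first and then replaying the $T'$-sources one round later, using the head start at $v$ to repair the one place where distances in $T$ and $T'$ disagree. Write $N_T(u)=\{v,w_1,\dots,w_q\}$ with $q=d_T(u)-1\ge 2$, where $w_1,\dots,w_p$ are exactly the neighbours of $u$ that are leaves of $T-v$. Since smoothing creates no new vertices, $V(T')\subseteq V(T)$: concretely $T'$ is obtained from $T-v$ by deleting $u$ (and, if $p\ge 3$, the leaves $w_3,\dots,w_p$) and joining the surviving neighbours of $u$ into a path $P$, every vertex of which still lies in $N_T(u)$. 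Let $(y_1,\dots,y_k)$ be an optimal burning sequence for $T'$, so $k=b(T')$, and note $k\ge 2$ because $|T'|\ge |P|\ge 2$. I would then run the process on $T$ that takes $v$ as the round-$1$ source and, for each $i\in[k]$, takes $y_i$ as the round-$(i+1)$ source whenever $y_i$ is still unburned (and an empty source otherwise). By Observation~\ref{obsv} this is realised by a genuine burning sequence of length at most $k+1$ that begins with $v$, and the burning time of any vertex $x$ is at most
\[
\tau(x):=\min\Bigl(1+d_T(x,v),\ \min_{i\in[k]}\bigl((i+1)+d_T(x,y_i)\bigr)\Bigr).
\]
It therefore suffices to prove $\tau(x)\le k+1$ for every $x\in V(T)$.

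The fire lit at $v$ (the first term of $\tau$) handles everything near $u$: $v$ burns in round $1$, $u$ in round $2$, and every vertex at distance $1$ from $u$ — in particular the deleted leaves $w_3,\dots,w_p$ and all vertices of $P$ — burns by round $3\le k+1$. Each remaining vertex $x$ lies in one of the subtrees of $T-v$ hanging off a surviving neighbour $w_a$ of $u$, and for these I would compare the two metrics. Letting $\ell$ be the number of edges of $P$ on the unique $T'$-geodesic from $x$ to a source $y_i$, replacing the traversed arc of $P$ by the length-$2$ detour $w_a\,u\,w_b$ gives $d_T(x,y_i)=d_{T'}(x,y_i)-(\ell-2)$ when $\ell\ge 1$, and $d_T(x,y_i)=d_{T'}(x,y_i)$ when $\ell=0$; thus $d_T(x,y_i)\le d_{T'}(x,y_i)$ unless $\ell=1$.

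Choosing $y_i$ to be a source that burns $x$ in $T'$, so that $d_{T'}(x,y_i)\le k-i$, the non-problematic cases $\ell\ne 1$ give $(i+1)+d_T(x,y_i)\le(i+1)+(k-i)=k+1$, hence $\tau(x)\le k+1$. The single obstacle — and exactly where the extra round purchased by burning $v$ is used — is $\ell=1$, in which $d_T(x,y_i)=d_{T'}(x,y_i)+1$ may exceed the budget by one. Here I would appeal to the $v$-fire instead: if $x$ sits in the subtree off $w_a$ and the geodesic crosses the single $P$-edge $w_aw_b$, then $d_T(x,u)=d_T(x,w_a)+1=d_{T'}(x,y_i)-d_{T'}(w_b,y_i)\le d_{T'}(x,y_i)\le k-i\le k-1$, so $1+d_T(x,v)=d_T(x,u)+2\le k+1$ and again $\tau(x)\le k+1$. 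With $\tau\le k+1$ everywhere the process finishes within $k+1$ rounds, giving $b(T)\le b(T')+1$ through a sequence that starts at $v$. The one delicate point to nail down is this metric distortion: inserting $P$ can pull two adjacent subtrees one step closer together in $T'$ than they are in $T$, and lighting $v$ first is precisely the compensation that keeps the schedule on time.
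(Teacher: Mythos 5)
Your proposal is correct and follows essentially the same route as the paper: you prepend $v$ as the round-$1$ source, replay the optimal $T'$-sequence delayed by one round via Observation~\ref{obsv}, and use the fact that $u$ burns in round $2$ (so all of $N_T(u)$ burns by round $3$) to absorb the one place where $T$-distances exceed $T'$-distances. The only stylistic difference is that you verify the schedule by an explicit geodesic case analysis on the number $\ell$ of traversed path-edges (isolating $\ell=1$ as the lone distortion, correctly repaired by the $v$-fire), whereas the paper argues component-wise that burn times satisfy $\ell(w)\le \ell'(w)+1$ within each subtree $T_x(ux)$; these are equivalent verifications of the same construction.
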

\begin{proof}
Let $t=b(T')$. Then $t\ge 2$ as $T'$ is obtained by smoothing $u$ in $T-v$ and $d_T(u)\ge 3$.

Let $(s_1,s_2,\dots,s_t)$ be an optimal burning sequence for $T'$. For each $w\in V(T')$, let $\ell'(w)$ be the round in which $w$ becomes burned in the corresponding burning process. Then $\ell'(w)\le t$ for all $w\in V(T')$.

Now it suffices to build a burning sequence for $T$ of length not greater than $t+1$ beginning with $v$.
Following the context of Observation~\ref{obsv}, let $v$ be the source of round 1,
$s_i$ be the source of round $i+1$ for $1\le i\le t$ if $s_i$ is unburned in round $i$, and the sources of all the other rounds be empty.
For each $w\in V(T)$, denote by $\ell(w)$ the round $w$ becomes burned in this process. Then by Observation~\ref{obsv}, it remains to show that $\ell(w)\le t+1$ for all $w\in V(T)$.

Note that $\ell(v)=1$, $\ell(u)=2$, $\ell(w)=3\le t+1$ for all the leaves $w$ in $V(T)\setminus V(T')$, and for all $i\in [t]$, 
\begin{equation}\label{ineq6}
\ell(s_i)\le \ell'(s_i)+1\le t+1.
\end{equation}
In the following, we shall show that $\ell(w)\le \ell'(w)+1$ for all $w\in (V(T)\cap V(T'))\setminus\{s_1,s_2,\dots,s_t\}$.

As shown in Figure \ref{Smooth3}, each vertex in $V(T)\cap V(T')$
belongs to a unique $T_{x}(ux)$ for some $x\in N_T(u)\cap V(T')$, where the structure of $T_{x}(ux)$ is the same in $T$ and $T'$,  and the only vertex in $T_{x}(ux)$ adjacent to some vertex in $V(T)\setminus V(T_{x}(ux))$ and $V(T')\setminus V(T_{x}(ux))$ is $x$. As a result, each vertex in $V(T_{x}(ux))\setminus\{x,s_1,s_2,\dots,s_t\}$ becomes burned only due to adjacent burned vertices in $V(T_{x}(ux))$. Then by (\ref{ineq6}), as long as each $x$ in $N_T(u)\cap V(T')$ satisfies $\ell(x)\le \ell'(x)+1$, we have
$\ell(w)\le \ell'(w)+1$ for all vertices $w$ in $(V(T)\cap V(T'))\setminus\{s_1,s_2,\dots,s_t\}$.
\begin{figure}[H]
\centering
\includegraphics[width=13cm]{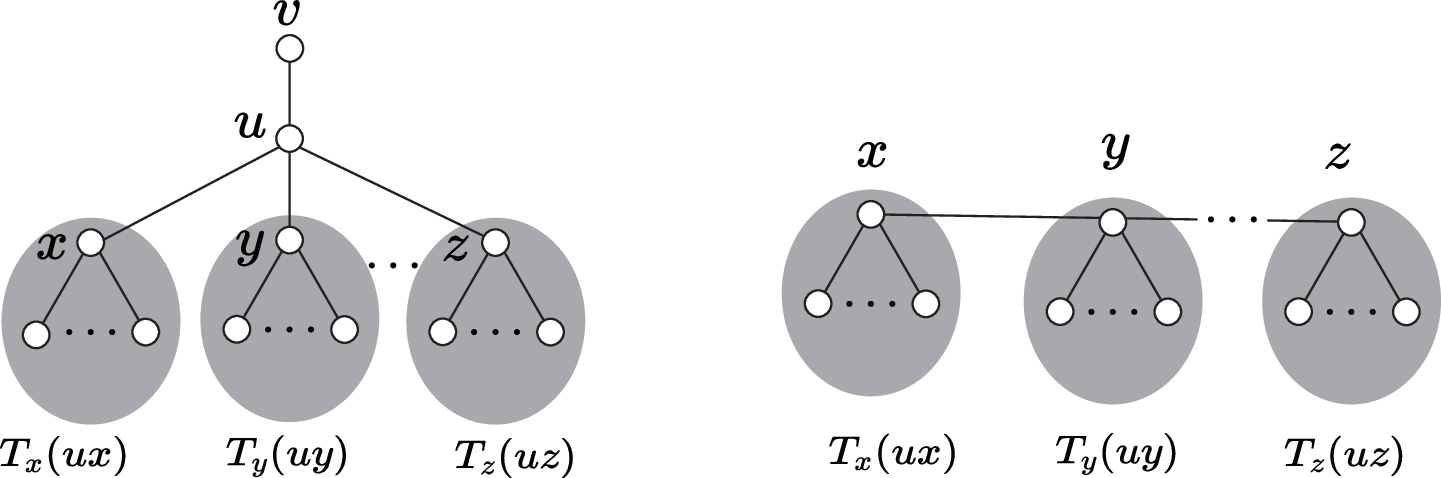}

\vspace{0cm}
{}\hfill \hspace{-0.2cm} (a) $T$  \hspace{6.2cm}  (b) $T'$  \hfill {}    	
\caption{An example for $T$ and $T'$}
\label{Smooth3}
\end{figure}

Since $\ell(u)=2$, it is clear that
$\ell(x)=\min \{\ell'(x)+1, 3\}$ for all $x\in N_T(u)\cap V(T')$. The consequence holds.
\end{proof}

Observation~\ref{obsv}, Lemmas~\ref{sep},~\ref{sep2} and~\ref{sep3} will be applied in the proof of Proposition~\ref{00}, which is a crucial step into proving Theorem~\ref{main1}.

\section{Proofs of main results}
\label{secmain}
In this section, we shall prove Theorem~\ref{main1} and Corollary~\ref{main}.

We first show the initial step for proving Theorem~\ref{main1} by the following proposition.

\begin{Proposition}\label{00}
Let $T$ be a tree of order $n\ge m(m+1)+1$ without degree-2 vertices, where $m$ is a non-negative integer. Then $b(T)\le  \lceil (n-m)^{\frac{1}{2}}\rceil$.	
\end{Proposition}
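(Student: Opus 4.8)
The plan is to argue by induction on the parameter $m$. When $m=0$ the claimed bound reads $b(T)\le\lceil n^{1/2}\rceil$, which is exactly Murakami's theorem (Theorem~\ref{Mur}) specialised to $n_2=0$; this serves as the base case, so no new idea is needed there. For the inductive step I would fix $m\ge 1$, set $k=\left\lceil(n-m)^{1/2}\right\rceil$, and first extract the numerical content of the hypothesis $n\ge m(m+1)+1$. Since this gives $n-m\ge m^2+1>m^2$, one gets $(k-1)^2<n-m\le k^2$ and hence $k\ge m+1$. The left inequality, rewritten as $(k-1)^2+(m-1)<n-1$, is precisely what is required so that a threshold of size about $(k-1)^2+(m-1)$ is admissible in Lemma~\ref{sep}; this is the arithmetic backbone that makes the decomposition applicable.

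The structural heart of the step is to spend the first burning round on a single source whose radius-$(k-1)$ reach swallows a whole short, bushy pendant region, and then to recurse on what remains with a budget of $k-1$ rounds. Concretely, I would apply Lemma~\ref{sep} with threshold $p$ near $(k-1)^2+(m-1)$ to locate a vertex $v$ off which hang only small components, invoke the leaf-count estimate of Lemma~\ref{sep2} (a tree without degree-$2$ vertices has more than half its vertices as leaves, so such pendant pieces are short in diameter) to guarantee that a suitable bundle of them together with $v$ is covered by one ball of radius $k-1$, and then pass to the residual tree. This passage must be carried out through the smoothing operation so that the residual tree $T'$ again has no degree-$2$ vertex, which is exactly what Lemma~\ref{sep3} supplies: it yields $b(T)\le b(T')+1$ together with a burning sequence for $T$ starting inside the detached region. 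Writing $n'=|T'|$ and checking $n'\ge(m-1)m+1$, the inductive hypothesis applies to $T'$ with parameter $m-1$ and gives $b(T')\le\left\lceil(n'-(m-1))^{1/2}\right\rceil$. Observation~\ref{obsv}, which permits empty sources from round $2$ onward, is then used to splice the round-$1$ source with the shifted optimal sequence of $T'$ into a single sequence for $T$.

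It remains to make the two parameters telescope. I would arrange the detached region to have size $\delta:=n-n'$ at least $(n-m)-(k-1)^2+1$, since precisely this forces $n'-(m-1)\le(k-1)^2$, hence $\left\lceil(n'-(m-1))^{1/2}\right\rceil\le k-1$, and therefore $b(T)\le(k-1)+1=k$, as desired. A short computation shows $(n-m)-(k-1)^2+1$ lies between $2$ and $2k$: it equals $2$ when $n-m$ is at its minimum $m^2+1$, and can reach $2k$ only when $n-m\approx k^2$, i.e.\ when $n$ is large, which is exactly the regime where the requirement $n'\ge(m-1)m+1$ is easy to meet. Thus the validity of the inductive hypothesis and the size of the region to be peeled are in tension only in complementary ranges of $n$, and both constraints can be honoured simultaneously.

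This last point is where I expect the main obstacle to lie. A naive use of Lemma~\ref{sep3} on a single pendant leaf removes only about two vertices, which is far too few when $n-m$ is close to $k^2$; the genuine content of the step is therefore to erase, in one round, an entire pendant bushy branch of size up to roughly $2k$ while simultaneously (a) keeping the residual tree free of degree-$2$ vertices so the inductive hypothesis can be reapplied, (b) keeping $n'$ above the threshold $(m-1)m+1$, and (c) respecting the ceiling at the boundary $n\approx k^2+m$. Balancing these three constraints at once — in effect strengthening Lemma~\ref{sep3} from ``peel one leaf'' to ``peel one ball-sized branch'', with the branch located by the thresholded decomposition of Lemma~\ref{sep} and its shortness guaranteed by the leaf surplus of Lemma~\ref{sep2} — is the crux of the argument, and the floor/ceiling bookkeeping around $n-m$ near a perfect square is the most error-prone part of the calculation.
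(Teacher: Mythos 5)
Your overall architecture---induction on $m$ with Murakami's theorem as the base case, a first source whose radius-$(k-1)$ reach absorbs a pendant region of size roughly $2k$, recursion with parameter $m-1$ on a residual tree kept free of degree-2 vertices via smoothing, and splicing through Observation~\ref{obsv}---is exactly the paper's plan, and your telescoping computation (peeling $\delta\ge (n-m)-(k-1)^2+1$ vertices forces $n'-(m-1)\le(k-1)^2$) is the same arithmetic. But there is a concrete error in how you instantiate Lemma~\ref{sep}: you propose the threshold $p\approx(k-1)^2+(m-1)$, whereas the coverage claim you need forces $p\approx 2k$. With your threshold, a single pendant component $T_{v_i}(vv_i)$ may have size up to about $k^2$, and a degree-2-free tree of size $s$ can have a path of length about $s/2$ from its attachment point (a caterpillar with an extra leaf at each spine vertex), so such a component is nowhere near contained in the ball of radius $k-1$ around $v$. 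Nor can you drop it from your ``suitable bundle'': once $v$ is detached, every component hanging off $v$ other than the $v_k$-side is separated from the residual tree, so all of them must be burned by the first source. The paper instead takes $p=2\lceil(n-m)^{1/2}\rceil-1.5$: then each small component has at most $2k-1$ vertices, hence by Lemma~\ref{sep2} at most $k-2$ internal vertices, hence lies within distance $k-1$ of $v$ (this is how the ``leaf surplus'' is actually used---it bounds internal-vertex counts, from which the distance bound follows, not diameter directly); and since Lemma~\ref{sep} guarantees $|T_v(vv_k)|>p$, the union of \emph{all} small components automatically has size at least $2k-1$, so after smoothing $v_k$ the residual satisfies $|T'|\le n-2k\le(k-1)^2+(m-1)$. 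You conflated the bound needed on the residual with the threshold fed to Lemma~\ref{sep}; your own observation that $\delta\le 2k$ suffices already indicates the threshold should be of order $k$, not $k^2$.

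Second, you flag but do not resolve the tension between peeling enough vertices and keeping $n'\ge(m-1)m+1$; asserting that the two constraints ``can be honoured simultaneously'' in complementary ranges is not an argument, and indeed $|T'|$ can legitimately fall below $(m-1)m+1$. The paper's fix is a case split you are missing: if $|T'|\le m^2$, bypass the induction entirely and apply Theorem~\ref{Mur}, which gives $b(T')\le m\le \lceil(n-m)^{1/2}\rceil-1$; only when $|T'|\ge m^2+1\ge(m-1)m+1$ is the inductive hypothesis invoked. (The paper also disposes of the boundary order $n=m(m+1)+1$ up front via Theorem~\ref{Mur}, since there $\lceil n^{1/2}\rceil=m+1=\lceil(n-m)^{1/2}\rceil$, which settles your worry about the ceiling at the boundary.) Finally, no strengthening of Lemma~\ref{sep3} from ``peel one leaf'' to ``peel one ball-sized branch'' is needed: Lemma~\ref{sep3} is applied once, to $T[V(T_{v_k}(vv_k))\cup\{v\}]$ with $u=v_k$ and the leaf $v$, while the bundle of small components is burned concurrently by the same round-1 source $v$ within the $k$-round budget and merged into a single sequence by Observation~\ref{obsv}.
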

\begin{proof}
If $m=0$ and $n\ge 1$, then the consequence follows from Theorem \ref{Mur}.

Now suppose that $b(T')\le  \lceil (n'-m')^{\frac{1}{2}}\rceil$ holds for all trees $T'$ of order $n'\ge m'(m'+1)+1$ without degree-2 vertices, where $m'<m$ is a non-negative integer.
Then we are going to show that $b(T)\le  \lceil (n-m)^{\frac{1}{2}}\rceil$ holds for any tree $T$ of order $n\ge m(m+1)+1$ and without degree-2 vertices. Here $m\ge 1$ and 
\begin{equation}\label{ineq3}
m= \lceil (m^2+1)^{\frac{1}{2}}\rceil-1\le \lceil (n-m)^{\frac{1}{2}}\rceil-1,
\end{equation}
where the last inequality is tight when $n= m(m+1)+1$.

If $n=m(m+1)+1$, then  $m<\sqrt{m^2+m+1}=\sqrt{n} < m+1$, implying that $\lceil n^{\frac{1}{2}}\rceil=m+1$. Then by Theorem \ref{Mur} and (\ref{ineq3}), for any tree $T$ of order $n=m(m+1)+1$ without degree-2 vertices, 
$$b(T)\le  \lceil n^{\frac{1}{2}}\rceil=m+1=\lceil (n-m)^{\frac{1}{2}}\rceil.$$ 
The consequence holds.

In the following, suppose that $T$ is a tree of order $n\ge m(m+1)+2$ without degree-2 vertices. Then $n\ge 4$. Let $p=2\lceil (n-m)^{\frac{1}{2}}\rceil-1.5$. Then $p\in [1,n-1)$ as $m\ge 1$. According to Lemma \ref{sep}, there exists a vertex $v$ in $T$ with $N_T(v)=\{v_1,v_2,\dots,v_k\}$, where $k\ge 2$, such that $|T_v(vv_k)|>p$ and $|T_{v_i}(vv_i)|\le p$ for all $i\in [k-1]$. 
For each $i\in [k]$, let  $T_i=T[V(T_{v_i}(vv_i))\cup \{v\}]$. 

In order to find a burning sequence for $T$, we have the following two claims to analyze $T_1,T_2,\dots, T_k$ separately.

\begin{Claim}\label{clm1}
For each $T_i$ with $i\in [k-1]$, following the context of Observation~\ref{obsv}, let $v$ be the source of round 1 and let the sources of all the other rounds be empty. Then this process of $T_i$ terminates in at most  $\lceil (n-m)^{\frac{1}{2}}\rceil$ rounds.
\end{Claim}
\noindent\textit{Proof.}
Since $T$ contains no degree-2 vertices, $T_i$ also contains no degree-2 vertices. Then by Lemma \ref{sep2}, the number of internal vertices in $T_i$ is at most 
$$\left\lfloor \frac{0+\lfloor p\rfloor+1-2}{2}\right\rfloor=\left\lfloor \lceil (n-m)^{\frac{1}{2}}\rceil-1.5\right\rfloor= \lceil (n-m)^{\frac{1}{2}}\rceil-2.$$

For burning $T_i$, let $v$ be the source of  round 1. Then in each of the following rounds, at least one internal vertex in $T_i$ will be burned by adjacent burned vertices. Since $T_i$ contains at most $\lceil (n-m)^{\frac{1}{2}}\rceil-2$ internal vertices and all leaves are adjacent to internal vertices, all vertices in $T_i$ will be burned via adjacency within $\lceil (n-m)^{\frac{1}{2}}\rceil$ rounds. 

Claim~\ref{clm1} holds.
{\hfill $\natural $}

\begin{Claim}\label{clm2}
$T_k$ has a burning sequence $(v,x_1,x_2,\dots,x_t)$, where $t\le \lceil (n-m)^\frac{1}{2}\rceil-1$ and $x_i\in V(T_{v_k}(vv_k))$ for all $i\in [t]$.
\end{Claim}
\noindent\textit{Proof.}
Since $T$ contains no degree-2 vertices, no vertex in $T_{v_k}(vv_k)$ has degree two, except possibly $v_k$.
Moreover, $v_k$ has degree either zero or at least two in $T_{v_k}(vv_k)$.
For the former case, $|T_{v_k}(vv_k)|=1$, and by (\ref{ineq3}), 
$$b(T_{v_k}(vv_k))=1\le m\le \lceil (n-m)^\frac{1}{2}\rceil-1.$$ 
Then the claim obviously holds.

In the following, suppose that $v_k$ has degree at least two in $T_{v_k}(vv_k)$. 
Let $T'$ be a tree obtained by smoothing $v_k$ in $T_{v_k}(vv_k)$. Then $T'$ contains no degree-2 vertices and
\begin{equation}\label{ineq1}
|T'|\le |T_{v_k}(vv_k)|-1\le \lfloor n-p\rfloor-1= n- 2\lceil (n-m)^{\frac{1}{2}}\rceil\le 
(\lceil (n-m)^{\frac{1}{2}}\rceil -1)^2 +(m-1).
\end{equation}

Below, we shall show that $b(T')\le \lceil (n-m)^{\frac{1}{2}}\rceil-1$. Then applying Lemma~\ref{sep3} to $T_k$, $v_k$ and the leaf $v$, the claim is true.

If $|T'|\le m^2$,
then by Theorem \ref{Mur} and (\ref{ineq3}),
$$b(T')\le  \lceil (m^2)^{\frac{1}{2}}\rceil = m\le \lceil (n-m)^{\frac{1}{2}}\rceil-1.$$

Otherwise, $|T'|\ge m^2+1$. Then $|T'|\ge (m-1)m+1$ obviously holds. By the assumption and (\ref{ineq1}),
$$b(T')\le \lceil (|T'|-(m-1))^{\frac12} \rceil 
\le \lceil (n-m)^{\frac{1}{2}}\rceil-1.$$

Hence Claim~\ref{clm2} holds.
{\hfill $\natural $}

Now we are ready to burn $T$. Following the context of Observation~\ref{obsv}, let $v$ be the source of round 1, let $x_i$ be the source of round $i+1$ for $1\le i\le t$, and let the source of each following round be empty. Claims~\ref{clm1} and~\ref{clm2} together tell us that this process terminates within $\lceil (n-m)^{\frac{1}{2}}\rceil$ rounds. Thus by Observation~\ref{obsv}, there exists a burning sequence for $T$ of length not greater than $\lceil (n-m)^{\frac{1}{2}}\rceil$.
The result is proven.
\end{proof}
\noindent\textbf{Remark.} Note that the condition $n\ge m(m+1)+1$ in Proposition \ref{00} is the best possible. See the tree $T$ of order 6 without degree-2 vertices in Figure~\ref{T6} for an example. For $n=6$ and $m=2$ with $n=m(m+1)$, it can be easily verified that $b(T)=3>\lceil (6-2)^{\frac12} \rceil$. 
\begin{figure}[ht]
\centering
\includegraphics[width=4.5cm]{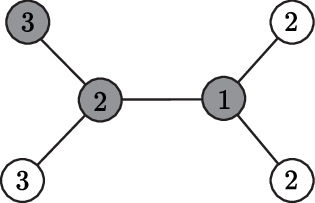}
\caption{A tree $T$ of order 6, without degree-2 vertices and $b(T)=3$}
\label{T6}
\end{figure}

Now we prove Theorem~\ref{main1} via the proposition below.

\begin{Proposition}[\cite{Bonato}]\label{isometric}
	Let $T_0$ be a subtree of a tree $T$. Then $b(T_0)\le b(T)$.
\end{Proposition}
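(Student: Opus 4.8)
The plan is to turn an optimal burning sequence for $T$ into a burning sequence for $T_0$ of no greater length by projecting each source onto the subtree $T_0$. Set $k=b(T)$ and fix an optimal burning sequence $(x_1,x_2,\dots,x_k)$ for $T$. First I would record the standard distance reformulation of the burning process: the source $x_i$ of round $i$ becomes burned in round $i$, so every vertex at distance $j$ from $x_i$ is burned by round $i+j$; consequently, since all of $T$ is burned within $k$ rounds, each $w\in V(T)$ satisfies $d_T(x_i,w)\le k-i$ for some $i\in[k]$.

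Next I would set up the projection. Because $T_0$ is a connected subtree of the tree $T$, the unique path in $T$ between any two vertices of $T_0$ lies entirely in $T_0$, so $T_0$ is \emph{isometric} in $T$; that is, $d_{T_0}(a,b)=d_T(a,b)$ for all $a,b\in V(T_0)$. For each $u\in V(T)$ let $\pi(u)$ be the vertex of $T_0$ nearest to $u$; it is unique, and for every $w\in V(T_0)$ the path in $T$ from $u$ to $w$ runs through $\pi(u)$, whence $d_T(u,w)=d_T(u,\pi(u))+d_{T_0}(\pi(u),w)$ and in particular $d_{T_0}(\pi(u),w)\le d_T(u,w)$.

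Finally I would burn $T_0$ using the projected sources. Following Observation~\ref{obsv}, in round $i$ take the source to be $\pi(x_i)$ if it is still unburned and empty otherwise; in either case $\pi(x_i)$ is burned by the end of round $i$. Given any $w\in V(T_0)$, choose $i$ with $d_T(x_i,w)\le k-i$; then $d_{T_0}(\pi(x_i),w)\le k-i$, and since $\pi(x_i)$ is burned by round $i$, the vertex $w$ is burned by round $i+(k-i)=k$. Hence every vertex of $T_0$ is burned within $k$ rounds, and Observation~\ref{obsv} converts this into a genuine burning sequence for $T_0$ of length at most $k$, giving $b(T_0)\le k=b(T)$. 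The one point needing care is that distinct sources $x_i$ may share the same projection, or a projected vertex may already be burned before its designated round; this is exactly why allowing empty sources (via Observation~\ref{obsv}) is needed, and it is the only genuine subtlety in the argument, the rest reducing to the tree-distance identity for $\pi$.
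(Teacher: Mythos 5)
Your proof is correct. The paper itself offers no proof of this proposition (it is quoted directly from Bonato, Janssen and Roshanbin), and your argument --- projecting each source onto $T_0$ via the nearest-point map, using the gate property $d_T(u,w)=d_T(u,\pi(u))+d_{T_0}(\pi(u),w)$ together with the standard ball-covering characterization of burning sequences, and letting Observation~\ref{obsv} absorb repeated or already-burned projections --- is essentially the argument given in that cited reference, so there is nothing to fix.
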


\noindent\textit{Proof of Theorem~\ref{main1}.}
Let  $m=\left\lceil\sqrt{(n+n_2)+0.25}-1.5\right\rceil\ge 0$. Then $n+n_2\ge m(m+1)+1$.

Let $T_1$ be the tree obtained from $T$ by adding a leaf to each of the degree-2 vertices in $T$ such that no vertex remains degree 2. Then $|T_1|=n+n_2$. By Propositions \ref{00} and \ref{isometric}, 
$$b(T)\le b(T_1)\le \left\lceil \left((n+n_2)-\left\lceil\sqrt{n+n_2+0.25}-1.5\right\rceil\right)^{\frac{1}{2}}\right\rceil.$$
\proofend

\vspace{0.3cm}
The proof of Corollary \ref{main} is as follows.

\vspace{0.3cm}
\noindent \textit{Proof of Corollary \ref{main}.} 
If $n=1$, then the consequence holds. 
For $n\ge 2$, let $n_2$ be the number of degree-2 vertices in $T$. We shall show that for all integers $n_2\le \left\lfloor \sqrt{n-1}\right\rfloor$, 
\begin{equation}\label{eq1}
\left\lceil \left(n+n_2-\left\lceil\sqrt{n+n_2+0.25}-1.5\right\rceil\right)^{\frac{1}{2}}\right\rceil\le \lceil n^{\frac12}\rceil,
\end{equation}
by which and Theorem~\ref{main1} the consequence holds. 

Let 
$$
f_n(x)= \left(n+x-\sqrt{n+x+0.25}+1.5\right)^{\frac{1}{2}},
$$
where $0\le x\le n-2$. 
Then $f_n(x)>0$ and $f_n(x)$ is monotonically increasing as
$$\frac{d\left(f_n(x)\right)}{dx}=\frac{1-\frac{1}{2\sqrt{n+x+0.25}}}{2f_n(x)}>0.$$

Let $x_0=\sqrt{n-1}-1$. Then $f_n(x_0)=\sqrt{n}$.
Since $f_n(x)$ is monotonically increasing, we have $f_n(x)\le \sqrt{n}$ for all $x\le x_0$, 
which implies that $x\le \sqrt{n+x+0.25}-1.5$ and 
$$\lceil x\rceil \le\left\lceil\sqrt{n+x+0.25}-1.5\right\rceil\le \left\lceil\sqrt{n+\lceil x\rceil+0.25}-1.5\right\rceil.$$ 
Thus for all $x\le x_0$, we have
$$\left\lceil \left(n+\lceil x\rceil-\left\lceil\sqrt{n+\lceil x\rceil+0.25}-1.5\right\rceil\right)^{\frac{1}{2}}\right\rceil  \le \lceil  n^{\frac12}\rceil.$$
Hence (\ref{eq1}) holds for all integers $n_2\le \lfloor x_0\rfloor =\left\lfloor \sqrt{n-1}-1\right\rfloor$.

Now it remains to consider the case when $n_2=\left\lfloor \sqrt{n-1}\right\rfloor\ge 1$.
Note that in this case,
$\sqrt{(n_2+0.5)^2+1}\in (n_2+0.5,n_2+1]$, 
which means $\left\lceil \sqrt{(n_2+0.5)^2+1}-0.5\right\rceil=n_2+1$. Then 
$$n_2=\left\lceil \sqrt{(n_2+0.5)^2+1}-0.5 \right\rceil-1\le \left\lceil\sqrt{n+n_2+0.25}-1.5\right\rceil,$$
which implies (\ref{eq1}) holds.
\proofend

As applications, the following corollaries are direct.

\begin{Corollary}\label{fullbinary}
	Let $T$ be a full binary tree of order $n$. Then $b(T)\le \lceil n^{\frac{1}{2}}\rceil$.
\end{Corollary}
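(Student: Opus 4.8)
The plan is to deduce Corollary~\ref{fullbinary} directly from Corollary~\ref{main} by counting degree-2 vertices in a full binary tree. Recall that a full binary tree is one in which every internal vertex has exactly two children, so every non-leaf vertex has degree $3$ (two children plus one parent), except the root, which has degree $2$. Thus a full binary tree has exactly one degree-2 vertex, namely the root, so $n_2=1$. I would then verify that $1\le \left\lfloor\sqrt{n-1}\right\rfloor$ holds for every full binary tree, which is immediate since a full binary tree has at least $n\ge 3$ vertices, giving $\left\lfloor\sqrt{n-1}\right\rfloor\ge\left\lfloor\sqrt{2}\right\rfloor=1$. With $n_2=1\le\left\lfloor\sqrt{n-1}\right\rfloor$ confirmed, Corollary~\ref{main} applies verbatim and yields $b(T)\le\lceil n^{1/2}\rceil$.

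The one subtlety I would be careful about is the convention for ``full binary tree.'' Under the standard definition (every internal node has exactly two children), the degree sequence is as described above and the count $n_2=1$ is forced; but some authors define a full/complete binary tree as one where all leaves lie at the same depth and every internal node has two children, while others allow an ``unrooted'' reading. For safety I would state the degree argument explicitly rather than appeal to a named fact: a leaf has degree $1$, a non-root internal vertex has degree $3$, and the root has degree $2$, so the only degree-2 vertex is the root. This pins down $n_2=1$ regardless of minor phrasing differences, provided the tree is nontrivial.

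I do not anticipate a genuine obstacle here, since the corollary is a clean specialization: the entire content lives in the inequality $n_2=1\le\left\lfloor\sqrt{n-1}\right\rfloor$, which holds for all $n\ge 2$. The only thing worth double-checking is the boundary, and I would dispose of the tiny cases ($n=1$ or a single-edge tree) separately or simply note that a full binary tree has $n\ge 3$. After that, the result is a one-line invocation of Corollary~\ref{main}.

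\begin{proof}
A full binary tree $T$ has the property that every internal vertex has exactly two children. Hence every leaf of $T$ has degree $1$, every non-root internal vertex has degree $3$ (two children and one parent), and the root has degree $2$. Therefore the root is the unique degree-2 vertex of $T$, so $n_2=1$. Moreover, a full binary tree satisfies $n\ge 3$, whence $\left\lfloor\sqrt{n-1}\right\rfloor\ge\left\lfloor\sqrt{2}\right\rfloor=1=n_2$. Thus $T$ is a tree of order $n$ with at most $\left\lfloor\sqrt{n-1}\right\rfloor$ degree-2 vertices, and Corollary~\ref{main} gives $b(T)\le\lceil n^{\frac{1}{2}}\rceil$.
\end{proof}
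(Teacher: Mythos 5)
Your proof is correct and follows essentially the same route as the paper: both arguments simply count the degree-2 vertices of a full binary tree (only the root can have degree 2, so $n_2\le 1\le\left\lfloor\sqrt{n-1}\right\rfloor$) and invoke Corollary~\ref{main}. The only cosmetic difference is that the paper says ``at most one'' degree-2 vertex, which also covers the degenerate single-vertex tree that your assertion $n\ge 3$ excludes; as you note yourself, that case is trivial since then $n_2=0$ and Corollary~\ref{main} still applies.
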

\begin{proof}
Note that in a full binary tree, all vertices have either 2 or 0 children. As a result, a full binary tree has at most one degree-2 vertex, which is the root. Then the result follows from Corollary \ref{main}.
\end{proof}

\begin{Corollary}
	Let $G$ be a graph of order $n$. If $G$ has a spanning tree $T$ such that $T$ contains at most $\left\lfloor \sqrt{n-1}\right\rfloor$ degree-2 vertices, then $b(G)\leq \lceil n^{\frac{1}{2}}\rceil$.
\end{Corollary}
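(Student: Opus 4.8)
The plan is to obtain this statement as an immediate consequence of the tree case already established in Corollary~\ref{main}, combined with the reduction from general connected graphs to spanning trees provided by Proposition~\ref{GT}. The essential observation is that a spanning tree of $G$ carries all $n$ vertices of $G$, so both the order hypothesis and the degree-$2$ hypothesis transfer verbatim from $G$ to the tree level, allowing Corollary~\ref{main} to be applied directly.

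First I would invoke Proposition~\ref{GT}, which asserts that $b(G)=\min\{b(T'): T'\text{ is a spanning tree of }G\}$. In particular, for the specific spanning tree $T$ named in the hypothesis we obtain $b(G)\le b(T)$. Since $T$ spans $G$, it has order exactly $n$, and by assumption it contains at most $\left\lfloor\sqrt{n-1}\right\rfloor$ degree-$2$ vertices; hence $T$ satisfies precisely the hypotheses of Corollary~\ref{main}, which yields $b(T)\le\lceil n^{\frac{1}{2}}\rceil$. Chaining the two inequalities gives $b(G)\le b(T)\le\lceil n^{\frac{1}{2}}\rceil$, as required.

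There is essentially no obstacle here, since the substantive work has already been carried out in Corollary~\ref{main} and Proposition~\ref{GT} does the lifting from trees to graphs at no cost. The only point worth stating explicitly is that one does not need $T$ to realise the minimum in Proposition~\ref{GT}: the one-sided bound $b(G)\le b(T)$ for this particular $T$ already suffices, and it holds immediately because $T$ is one of the spanning trees over which that minimum is taken.
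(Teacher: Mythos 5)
Your proposal is correct and matches the paper's proof exactly: the paper likewise derives the corollary immediately from Corollary~\ref{main} applied to the spanning tree $T$ (which has order $n$ and the stated bound on degree-$2$ vertices) together with Proposition~\ref{GT}, which gives $b(G)\le b(T)$. Your explicit remark that only the one-sided inequality $b(G)\le b(T)$ is needed, not that $T$ attains the minimum, is a fair point of care but adds nothing beyond what the paper's one-line proof already uses.
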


\begin{proof}
The consequence follows from Corollary \ref{main} and Proposition \ref{GT}.
\end{proof}

\section{Concluding remarks}
Theorem \ref{main1} establishes an upper bound of the burning number for a tree $T$ of order $n$ with $n_2$ degree-2 vertices as $\left\lceil \left(n+n_2-\left\lceil\sqrt{n+n_2+0.25}-1.5\right\rceil\right)^{\frac{1}{2}}\right\rceil$. 
In particular, the Burning Number Conjecture holds for trees of order $n$ with at most $\left\lfloor \sqrt{n-1}\right\rfloor$  degree-2 vertices. 
Besides, for trees containing no degree-2 vertices, we obtain an upper bound of the burning number which may be lower than $\lceil n^{\frac{1}{2}}\rceil$.  
However, as also noted by Murakami in~\cite{Mur} for Theorem~\ref{Mur}, Theorem~\ref{main1} might not be better for trees of large orders than the result from an unpublished manuscript [S. Das, S.S. Islam, R.M. Mitra, S. Paul, Burning a binary tree and its generalization,  arXiv:2308.02825]. In their work, they claim to have developed an algorithm that establishes the bound $b(T)\le \lceil(n+n_2+8)^{\frac{1}{2}})\rceil-1$ for any tree $T$ of order $n\ge 50$ with $n_2$ degree-2 vertices.  
We have not verified the correctness of their result.

 \section*{Acknowledgements}

The authors wish to thank Jun Ge for providing valuable suggestions and comments that have significantly contributed to improving this manuscript.
This research is supported by National Natural Science Foundation of China (No. 12171402).  

\end{document}